\documentclass{jssc}

\def\textsubscript#1%
{$_{\text{#1}}$}

\def\cdd{\mbox{\boldmath$\cdot$}~}

\input{amssym.def}
\include{graphix}

\usepackage{lineno}
\usepackage[colorlinks=true,breaklinks=true,pdftex]{hyperref}
\modulolinenumbers[1]

\usepackage[dvipsnames]{xcolor}
\usepackage[ruled, linesnumbered]{algorithm2e}
\usepackage{algorithmic}
\usepackage[english]{babel}
\usepackage{bm}
\usepackage{placeins}
\usepackage{xfrac}
\usepackage{epstopdf,cmap,amssymb,amsfonts,amsmath,mathtext,enumerate,float,natbib,indentfirst,graphicx,multirow,color,setspace,subfigure,amsthm,chngcntr,url}
\usepackage{lmodern}
\usepackage{mathrsfs}
\usepackage[colorlinks=true, breaklinks=true, pdftex]{hyperref}
\usepackage{booktabs}

\makeatletter
\def\@oddfoot{\hfill}
\newcount\shumeicount
\def\setshumei#1#2#3{%
  \shumeicount=\count0
  \def\@oddhead{%
    \raise-5pt\hbox to0pt{\vrule width\hsize height 0pt depth 0.4pt\hss}\relax
    \ifnum \shumeicount=\count0
      \raise-7pt\hbox to0pt{\vrule width\hsize height 0pt depth 0.4pt\hss}\relax
      #1
    \else
      \ifodd\count0
        #2
      \else
        #3
       \fi
     \fi
  }%
}
\makeatother
\makeatletter
\def\@oddfoot{\hfill}
\newcount\shujiaocount
\def\setshujiao{%
  \shujiaocount=\count0
  \def\@oddfoot{%
      \ifodd\count0
      \else
      \fi
  }%
}
\makeatother
\def\title#1#2#3#4{{
  \vspace*{0.3cm}
  \begin{flushleft} \Large\bf #1\end{flushleft}
  \vspace*{-0.2cm}
      \begin{flushleft}
      \bf #2
      \end{flushleft}
      \footnotetext{\hspace{-6mm} #3\\ #4}}}

\def\dshm#1#2#3#4
{\setshumei{J Syst Sci Complex (#1) #2:
{\thepage--\pageref{LastPage}}\hfill}
            {\hfill {\small #3}\hfill\hbox to0pt{\hss\thepage}}
            {\hbox to0pt{\thepage\hss}\hfill {\small #4}\hfill
            }
            \setshujiao}
\def\drd#1#2
{{\vskip 1cm\small \begin{flushleft}
 #1 \\
 #2\\
\copyright The Editorial Office of JSSC \&  Springer-Verlag GmbH
Germany 2018
\end{flushleft}}}

\def\bar{\overline}
\def\epsilon{\varepsilon}

\begin{document}


\title{Bijectivity analysis of rational T-spline surfaces via Bernstein representations}
{\uppercase{Li} Jia-Xuan$^\diamond$ \cdd \uppercase{Yu} Ying-Ying$^\diamond$ \cdd \uppercase{Liu} Ya-Shu \cdd \uppercase{Li} Xin \cdd \uppercase{Ji} Ye$^*$ \cdd \uppercase{Zhu} Chun-Gang}
{\uppercase{Li} Jia-Xuan \cdd \uppercase{Yu} Ying-Ying \cdd \uppercase{Liu} Ya-Shu \\
School of Mathematics, Liaoning Normal University, Dalian $116029$, China. \\
\uppercase{Li} Xin \cdd \uppercase{Zhu} Chun-Gang \\
School of Mathematical Sciences, Dalian University of Technology, Dalian $116024$, China. \\
\uppercase{Ji} Ye \\
Delft Institute of Applied Mathematics, Delft University of Technology, Delft $2628$ CD, the Netherlands. Email: y.ji-1@tudelft.nl \\
} 
{$^*$Corresponding author.\\
$^\diamond$The two authors contribute equally to this work.}

\drd{DOI: }{Received: x x 20xx}{ / Revised: x x 20xx}


\dshm{20XX}{XX}{Bijectivity analysis of rational T-spline surfaces via Bernstein representations}{\uppercase{Li} Jia-Xuan$^\diamond$ \cdd \uppercase{Yu} Ying-Ying$^\diamond$ \cdd \uppercase{Liu} Ya-Shu \cdd \uppercase{Li} Xin \cdd \uppercase{Ji} Ye$^*$ \cdd \uppercase{Zhu} Chun-Gang}

\Abstract{Ensuring the bijectivity of spline-based parameterizations is fundamental in geometric modeling and isogeometric analysis, as invalid mappings may lead to self-intersections, singular Jacobians, and numerical instability. While T-splines offer enhanced flexibility through local refinement, this flexibility also makes bijectivity verification significantly more challenging. In this work, we propose a rigorous and efficient framework for bijectivity analysis of rational T-spline surfaces based on B\'ezier extraction. The key idea is to reformulate the T-spline representation into a collection of element-wise rational B\'ezier patches, on which the Gram determinant of the mapping admits a Bernstein polynomial representation. This enables a coefficient-based analysis of local regularity by exploiting the convex hull and positivity properties of the Bernstein basis. Based on this formulation, we derive a sufficient condition for bijectivity from the nonnegativity of Bernstein coefficients, together with a necessary condition based on the sign consistency of corner coefficients. For cases where these conditions are inconclusive, we introduce a hierarchical subdivision strategy that progressively localizes ambiguous regions and resolves them through refinement. The proposed method provides a certified and adaptive procedure for bijectivity verification that avoids dense numerical sampling and remains computationally efficient. Numerical experiments on complex T-spline geometries demonstrate that the approach accurately detects both valid and near-degenerate configurations, while scaling effectively to large models with thousands of rational B\'ezier patches. The framework is fully compatible with standard isogeometric analysis workflows.}      

\Keywords{Isogeometric analysis, T-splines, bijectivity analysis, Bernstein polynomials, B\'ezier extraction}        




\section{Introduction}
\label{sec:introduction}

Spline-based representations, such as B-splines and Non-Uniform Rational B-Splines (NURBS)~\citep{piegl2012nurbs}, have long been the standard in computer-aided geometric design (CAGD) due to their strong approximation properties and ability to represent complex geometries exactly. Within the framework of isogeometric analysis (IGA)~\citep{hughes2005isogeometric,cottrell2009isogeometric}, these representations further enable a seamless integration between geometric modeling and numerical simulation~\citep{xu2017unified}. However, classical tensor-product constructions impose a rigid topological structure, which limits their flexibility in handling local refinement and complex geometries.

T-splines, introduced in~\citep{sederberg2003t}, generalize NURBS by allowing T-junctions in the control mesh. This relaxation of the tensor-product constraint enables local refinement without global mesh propagation, making T-splines particularly attractive for adaptive analysis and complex geometric modeling~\citep{sederberg2004tspline,sederberg2008watertight}. As a result, T-splines have been widely adopted in both CAGD and IGA~\citep{verhoosel2011isogeometric,verhoosel2011isogeometricgradient,benson2010generalized,dorfel2010adaptive}.

Despite these advantages, the increased flexibility of T-splines significantly complicates the analysis of geometric validity~\citep{brovka2014new}. A fundamental requirement for both geometric modeling and numerical simulation is the bijectivity of the parametrization~\citep{ji2023improved}. Violation of bijectivity leads to self-intersections or fold-overs~\citep{yu2026regularity,Coons-surface}, resulting in invalid geometries and failure of numerical procedures such as quadrature, stiffness matrix assembly, and convergence of the solution~\citep{ji2021constructing}. Ensuring bijectivity is therefore a critical yet challenging problem in practical T-spline applications.

Existing approaches for validity checking typically rely on numerical sampling, Jacobian sign evaluation, or optimization-based techniques~\citep{terahara2025t,terahara2025t2}. However, these approaches face inherent limitations. Sampling-based methods cannot guarantee global bijectivity, as they only provide pointwise verification, while optimization-based approaches, which aim to compute the global minimum of the Jacobian determinant, are often computationally expensive and difficult to integrate into standard IGA workflows~\citep{lopez2017spline}. These challenges motivate the development of more robust and theoretically grounded verification methods~\citep{ji2023curvature}.

In this work, we propose a Bernstein-based framework for bijectivity analysis of rational T-spline parametrizations. The key idea is to reformulate the T-spline representation into a collection of element-wise rational B\'ezier patches via B\'ezier extraction, on which the Gram determinant of the mapping admits a Bernstein polynomial representation. This formulation enables a coefficient-based analysis of local regularity by exploiting the convex hull and positivity properties of the Bernstein basis.

Based on this representation, we derive both sufficient and necessary conditions for bijectivity. A sufficient condition is obtained from the nonnegativity of Bernstein coefficients, while a necessary condition is derived from the sign consistency of corner coefficients. For cases where these conditions are inconclusive, we introduce a hierarchical subdivision strategy that progressively localizes ambiguous regions and resolves them through refinement.

The main contributions of this work are summarized as follows:
\begin{itemize}
    \item A Bernstein-based formulation of the Gram determinant for rational T-spline parametrizations via B\'ezier extraction;
    \item Sufficient and necessary sign conditions for bijectivity that enable efficient certification and early rejection;
    \item A hierarchical subdivision strategy that resolves ambiguous cases through localized refinement;
    \item An efficient and scalable algorithm fully compatible with standard isogeometric analysis workflows.
\end{itemize}

These ingredients are integrated into a practical algorithm that performs element-wise bijectivity verification. The resulting method provides strong theoretical guarantees while remaining computationally efficient. Compared with existing approaches, the proposed framework offers several advantages. First, it establishes a mathematically rigorous criterion based on Bernstein representations, avoiding reliance on dense numerical sampling. Second, it is fully compatible with standard B\'ezier extraction procedures in isogeometric analysis, making it straightforward to integrate into existing workflows. Finally, the hierarchical subdivision strategy ensures robustness in near-degenerate configurations by localizing refinement to ambiguous regions, thereby maintaining overall efficiency.

The remainder of this paper is organized as follows. Section~\ref{sec:prelimiaries} introduces the T-spline representation and the B\'ezier extraction framework underlying the proposed analysis. Section~\ref{sec:theory} develops the Bernstein-based formulation of the Gram determinant and establishes the corresponding sign conditions for bijectivity. Section~\ref{sec:numerical} presents numerical experiments to validate the effectiveness and efficiency of the method. Section~\ref{sec:conclusion} concludes the paper.

\section{Preliminaries}
\label{sec:prelimiaries}

This section briefly reviews the T-spline representation and the B\'ezier extraction framework, which provide the foundation for the Bernstein-based analysis of the Gram determinant developed in the following sections.

\subsection{T-spline representation}
\label{sec:tspline}

NURBS are the de facto standard in computer-aided geometric design due to their ability to represent complex geometries exactly and their favorable approximation properties~\citep{piegl2012nurbs}. However, classical NURBS constructions rely on tensor-product parameterizations, which introduce strong global coupling among control points and limit the flexibility of local refinement (see Fig.~\ref{fig:t_spline}).

\begin{figure}
    \centering
    \includegraphics[width=0.8\linewidth]{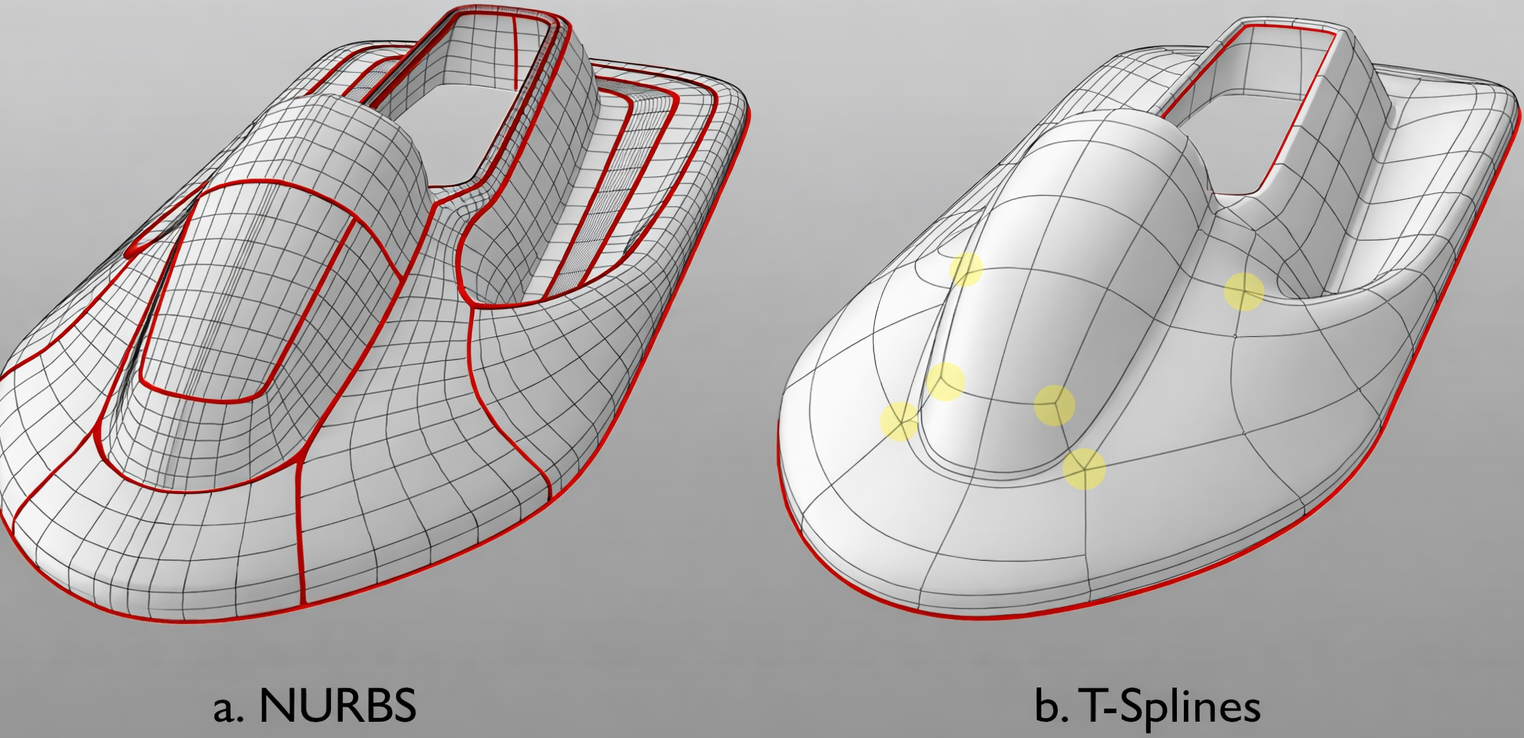}
    \caption{Comparison of NURBS and T-spline control meshes (adapted from~\citep{sederberg2010t}). In tensor-product NURBS, refinement requires global propagation of control points, whereas T-splines allow local refinement through the introduction of T-junctions.}
    \label{fig:t_spline}
\end{figure}

T-splines generalize NURBS by relaxing the tensor-product constraint through the introduction of T-junctions in the control mesh~\citep{sederberg2003t}, as shown in Fig.~\ref{fig:t_spline}. This construction enables local refinement without global propagation of control points, making T-splines particularly suitable for complex geometries and adaptive isogeometric analysis. Under suitable admissibility conditions, such as analysis-suitable T-splines~\citep{scott2012local}, the resulting basis functions are linearly independent, form a partition of unity, and possess the smoothness required for analysis.

Let $(\xi,\eta) \in \Omega \subset \mathbb{R}^2$ denote the parametric domain associated with a T-mesh, which induces a partition of $\Omega$ into a collection of elements. A rational T-spline surface is defined as
\begin{equation}
\bm{x}(\xi,\eta)
=
\sum_{i=1}^{n}
R_i(\xi,\eta)\,\mathbf{P}_i,
\label{eq:rational_t_spline_surface}
\end{equation}
where $\mathbf{P}_i \in \mathbb{R}^d$ are the control points, and the rational basis functions are given by
\begin{equation}
R_i(\xi,\eta)
=
\frac{N_i(\xi,\eta)\, w_i}
{\displaystyle \sum_{j=1}^{n} N_j(\xi,\eta)\, w_j},
\label{eq:rational_t_spline_basis}
\end{equation}
with $w_i > 0$ denoting the associated weights. Here, $N_i(\xi,\eta)$ are locally supported T-spline blending functions constructed from local knot vectors induced by the T-mesh topology.

From an algebraic perspective, T-splines share the same rational structure as NURBS~\citep{sederberg2004tspline}. The key distinction lies in the use of local knot vectors: while tensor-product NURBS are defined with respect to global knot vectors, T-spline basis functions are associated with local knot intervals, enabling flexible and localized refinement~\citep{sederberg2008watertight}. This locality naturally leads to element-wise representations, which form the basis for the Bernstein-based analysis developed in the following section.

\subsection{B\'ezier extraction and Bernstein representation}
\label{sec:bezier_extraction}

B\'ezier extraction establishes a linear transformation between global T-spline basis functions and element-wise Bernstein basis functions defined on each element of the parametric domain. In particular, the restriction of the T-spline basis to an element can be represented as a linear combination of Bernstein polynomials via an element-specific extraction operator. This transformation enables the use of standard finite element assembly procedures within the isogeometric analysis framework by expressing spline basis functions in Bernstein form on each element~\citep{borden2011isogeometric,scott2011isogeometric}.

Beyond its computational advantages, this element-wise Bernstein representation provides a convenient framework for subsequent analysis, as it allows geometric quantities, such as the Jacobian determinant, to be expressed directly in terms of Bernstein polynomials.

\begin{figure}
    \centering
    \includegraphics[width=0.95\linewidth]{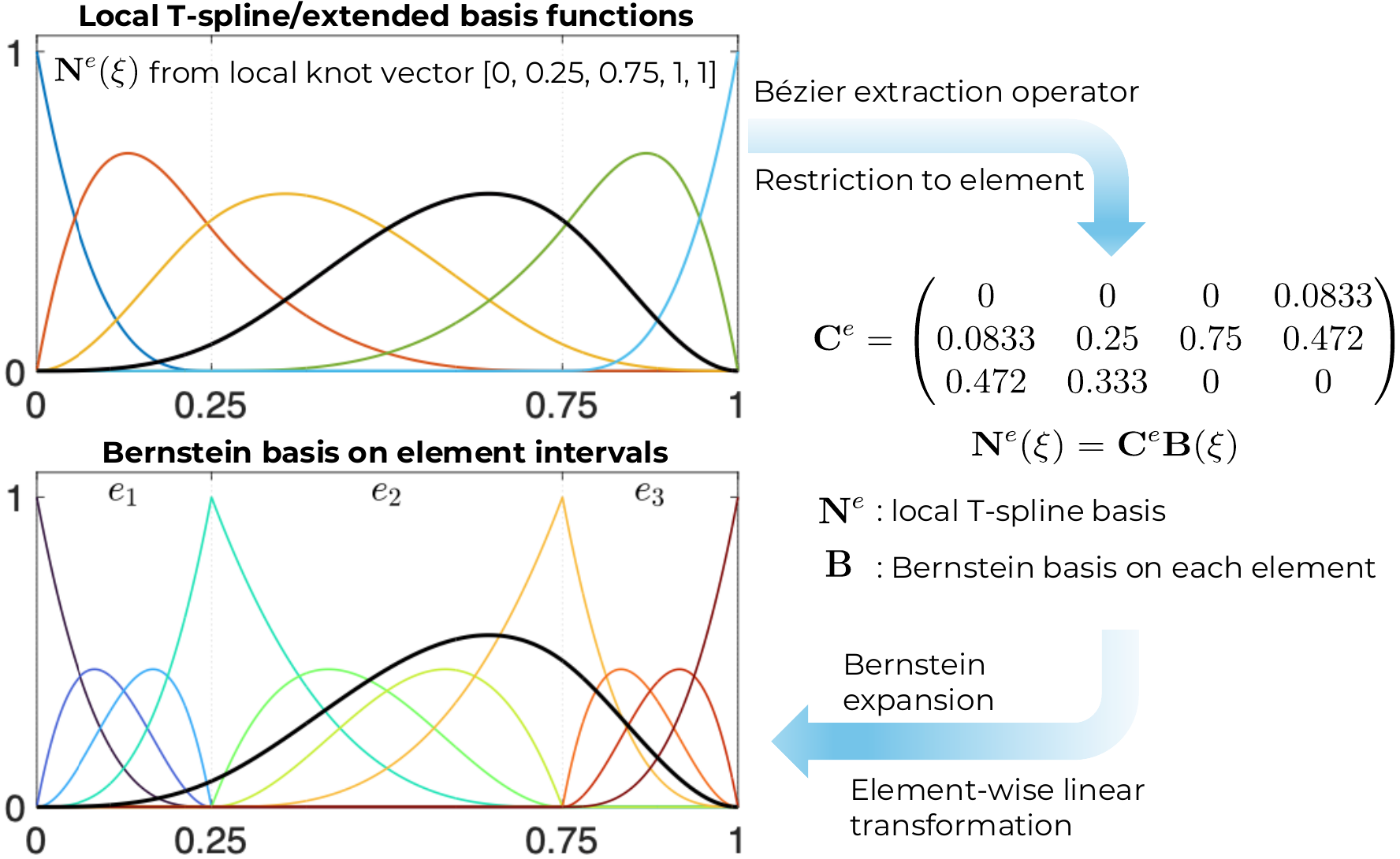}
    \caption{B\'ezier extraction in T-splines. The local T-spline basis functions $\mathbf{N}^e$ are first restricted to an element $e$, where they are represented as a linear combination of Bernstein basis functions $\mathbf{B}$ through the extraction operator $\mathbf{C}^e$, i.e., $\mathbf{N}^e = \mathbf{C}^e \mathbf{B}$. This element-wise Bernstein representation enables localized analysis while preserving exact geometric equivalence with the original T-spline parameterization.}
    \label{fig:bezier_extraction}
\end{figure}

Let $\mathbf{N}^e$ denote the vector of T-spline basis functions that are nonzero over an element $e$ (see Fig.~\ref{fig:bezier_extraction}). Restricted to the element $e$, these basis functions admit the representation
\begin{equation}
    \mathbf{N}^e = \mathbf{C}^e \mathbf{B},
\end{equation}
where $\mathbf{B}$ is the vector of Bernstein basis functions defined on the reference domain of $e$, and $\mathbf{C}^e$ is the corresponding B\'ezier extraction operator, which depends only on the local T-mesh topology and remains constant within the element.

For a rational T-spline surface~\eqref{eq:rational_t_spline_surface}, the mapping restricted to an element $e$ can be written as
\begin{equation}
    \bm{x}^e = (\mathbf{P}^e)^{\top} \mathbf{R}^e,
\end{equation}
where $\mathbf{P}^e$ collects the control points associated with the element, and $\mathbf{R}^e$ denotes the vector of rational basis functions. 
Using the extraction operator, the rational basis functions take the form
\begin{equation}
\mathbf{R}^e
=
\frac{\mathrm{diag}(\mathbf{w}^e)\,\mathbf{C}^e\,\mathbf{B}}{W},
\qquad
W = (\mathbf{w}^e)^{\top} \mathbf{C}^e \mathbf{B},
\end{equation}
where $\mathbf{w}^e$ is the vector of weights associated with the element, and $\mathrm{diag}(\mathbf{w}^e)$ is the corresponding diagonal matrix.

The first derivatives with respect to $\alpha \in \{\xi,\eta\}$ are given by
\begin{equation}
\mathbf{R}^e_{,\alpha}
=
\mathrm{diag}(\mathbf{w}^e) \mathbf{C}^e
\left(
\frac{\mathbf{B}_{,\alpha}}{W}
-
\frac{W_{,\alpha}}{W^2}\mathbf{B}
\right),
\end{equation}
where
\begin{equation}
W_{,\alpha}
=
(\mathbf{w}^e)^{\top} \mathbf{C}^e \mathbf{B}_{,\alpha}.
\end{equation}

Higher-order derivatives can be derived analogously. Consequently, all geometric quantities associated with the mapping can be expressed element-wise in Bernstein form prior to the rational weighting, which is particularly advantageous for subsequent analysis.

A key advantage of this formulation lies in the structural properties of the Bernstein basis. Bernstein polynomials are nonnegative over the reference domain and satisfy the convex hull property. Consequently, any quantity expressed in Bernstein form can be bounded directly in terms of its coefficients.

This observation is central to the present work. In particular, the Gram determinant $\det(\bm{\mathcal{J}}^\top \bm{\mathcal{J}})$ admits a Bernstein representation on each element. Therefore, its positivity can be assessed through the sign of the corresponding Bernstein coefficients. In particular, nonnegativity of the coefficients provides a sufficient condition for positivity of the Gram determinant over the element, and thus guarantees local bijectivity of the mapping. This coefficient-based perspective forms the foundation of the bijectivity analysis developed in the subsequent sections.

\section{Bernstein-coefficient-based bijectivity analysis}
\label{sec:theory}

In this section, we develop a Bernstein-coefficient-based framework for the bijectivity analysis of rational T-spline parametrizations. By means of B\'ezier extraction, the global T-spline surface is decomposed into a collection of element-wise rational B\'ezier patches, which enables a localized analysis of the Jacobian and its Gram determinant in Bernstein form.

Building on this representation, we express the Gram determinant on each element as a Bernstein polynomial and analyze its positivity through the corresponding coefficients. This formulation allows us to derive both sufficient and necessary conditions for local bijectivity, as well as a hierarchical strategy for resolving ambiguous cases.

\subsection{Element-wise rational B\'ezier representation}

Let $\bm{x}(\xi,\eta)$ be a rational T-spline surface defined over a parametric domain $\Omega$. Through B\'ezier extraction, the surface can be decomposed into a collection of rational B\'ezier elements. On each element $e$, the mapping admits the representation
\begin{equation}
\bm{x}^e(\xi,\eta)
=
\frac{\mathbf{F}^e(\xi,\eta)}{W^e(\xi,\eta)},
\qquad
(\xi,\eta)\in[0,1]^2,
\end{equation}
where $\mathbf{F}^e:\,[0,1]^2\to\mathbb{R}^3$ and $W^e:\,[0,1]^2\to\mathbb{R}$ are polynomial functions expressed in the Bernstein basis.

For notational convenience, define
\begin{equation}
\mathbf{U}^e = \mathbf{F}^e_\xi W^e - \mathbf{F}^e W^e_\xi,
\qquad
\mathbf{V}^e = \mathbf{F}^e_\eta W^e - \mathbf{F}^e W^e_\eta.
\end{equation}
Then, the Jacobian matrix of the parametrization is given by
\begin{equation}
\bm{\mathcal{J}}^e
=
[\bm{x}^e_\xi \ \bm{x}^e_\eta]
=
\frac{1}{(W^e)^2}
[\mathbf{U}^e \ \mathbf{V}^e].
\end{equation}

In the following, all bijectivity analysis is conducted at the element level.

\subsection{Bernstein representation of the Gram determinant}

For a rational B\'ezier patch, the Gram determinant is a highly nonlinear rational function whose sign is difficult to assess directly. To overcome this difficulty, we seek to express it in the Bernstein basis, which enables coefficient-based analysis due to the convex hull property of Bernstein polynomials. More specifically, once a Bernstein expansion is obtained, the sign of the Gram determinant can be inferred from its coefficients, thereby providing a practical and efficient criterion for regularity verification. To this end, we establish the following result.

\begin{theorem}[Bernstein representation of the Gram determinant]
\label{thm:gram-bernstein}
Let $\bm{\mathcal{J}}^e(\xi,\eta)$ be the Jacobian matrix of a rational B\'ezier element $e$ of bi-degree $(p,q)$. Then the Gram determinant admits the Bernstein expansion
\begin{equation}
\det\!\left((\bm{\mathcal{J}}^e)^\top \bm{\mathcal{J}}^e \right)(\xi,\eta)
=
\frac{1}{(W^e(\xi,\eta))^{8}}
\sum_{r=0}^{8p-2}\sum_{s=0}^{8q-2}
D_{rs}^e \, B_r^{8p-2}(\xi) B_s^{8q-2}(\eta),
\label{eq:JeTJe-final}
\end{equation}
where the coefficients $D_{rs}^e\in\mathbb{R}$ depend only on the control points and weights associated with the element.
\end{theorem}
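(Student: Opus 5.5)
The plan is to reduce the statement to a degree count for polynomials in the tensor-product Bernstein space. The rational factor comes from the Jacobian formula of the previous subsection, $\bm{\mathcal{J}}^e = (W^e)^{-2}[\mathbf{U}^e\ \mathbf{V}^e]$. It gives $(\bm{\mathcal{J}}^e)^\top\bm{\mathcal{J}}^e = (W^e)^{-4}\mathbf{G}^e$, where $\mathbf{G}^e$ is the $2\times 2$ matrix with entries $\mathbf{U}^e\cdot\mathbf{U}^e$, $\mathbf{U}^e\cdot\mathbf{V}^e$ and $\mathbf{V}^e\cdot\mathbf{V}^e$. Taking the determinant of a $2\times2$ matrix multiplies the scalar factor by its square, so
\begin{equation*}
\det\!\left((\bm{\mathcal{J}}^e)^\top \bm{\mathcal{J}}^e\right)
=\frac{1}{(W^e)^8}\Bigl(\|\mathbf{U}^e\|^2\|\mathbf{V}^e\|^2-(\mathbf{U}^e\cdot\mathbf{V}^e)^2\Bigr).
\end{equation*}
Since $W^e>0$ on $[0,1]^2$ (positive weights and nonnegative Bernstein basis), this is well defined. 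It remains to show that the numerator $D^e:=\|\mathbf{U}^e\|^2\|\mathbf{V}^e\|^2-(\mathbf{U}^e\cdot\mathbf{V}^e)^2$ is a polynomial of bi-degree at most $(8p-2,8q-2)$.

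For the degree count, I would use that $\mathbf{F}^e$ and $W^e$ have bi-degree $(p,q)$. Then $\mathbf{F}^e_\xi$ and $W^e_\xi$ have bi-degree $(p-1,q)$, and $\mathbf{F}^e_\eta$ and $W^e_\eta$ have bi-degree $(p,q-1)$. It follows that $\mathbf{U}^e=\mathbf{F}^e_\xi W^e-\mathbf{F}^e W^e_\xi$ has bi-degree at most $(2p-1,2q)$, and $\mathbf{V}^e$ has bi-degree at most $(2p,2q-1)$.

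The products then have the following bi-degrees:
\begin{itemize}
\item $\|\mathbf{U}^e\|^2\|\mathbf{V}^e\|^2$ has bi-degree at most $(2(2p-1)+4p,\ 4q+2(2q-1))=(8p-2,8q-2)$;
\item $(\mathbf{U}^e\cdot\mathbf{V}^e)^2$ has bi-degree at most $(2(4p-1),\ 2(4q-1))=(8p-2,8q-2)$.
\end{itemize}
In fact the leading $\xi$-terms of $\mathbf{U}^e$ cancel, so its $\xi$-degree is really $2p-2$ and the true degree is lower. This cancellation is not needed, though: any lower-degree polynomial can be degree-elevated to bi-degree $(8p-2,8q-2)$.

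Next I would invoke the fact that $\{B_r^{m}(\xi)B_s^{n}(\eta)\}$ is a basis of the space of polynomials of bi-degree at most $(m,n)$. This gives existence and uniqueness of the coefficients $D^e_{rs}$ in \eqref{eq:JeTJe-final}.

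To show the coefficients depend only on the element's control points and weights, I would make the computation explicit:
\begin{itemize}
\item write $\mathbf{F}^e=\sum \mathbf{P}^w_{ij}B_i^pB_j^q$ with $\mathbf{P}^w_{ij}=w_{ij}\mathbf{P}_{ij}$, and $W^e=\sum w_{ij}B_i^pB_j^q$, where these Bézier control points and weights come from $\mathbf{C}^e$, $\mathbf{P}^e$ and $\mathbf{w}^e$;
\item express the derivatives in Bernstein form using forward differences;
\item form all products with the Bernstein product rule $B_i^mB_k^n=\binom{m}{i}\binom{n}{k}\binom{m+n}{i+k}^{-1}B_{i+k}^{m+n}$, and degree-elevate where degrees differ.
\end{itemize}
Each $D^e_{rs}$ is then a fixed polynomial expression in these data.

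I expect the main obstacle to be the bookkeeping that keeps all four factors of the quartic expression in a common tensor-product Bernstein space. This matters most for the mixed term $\mathbf{U}^e\cdot\mathbf{V}^e$, whose factors have different bi-degrees in each variable. I would handle it by degree-elevating $\mathbf{U}^e$ and $\mathbf{V}^e$ to a common bi-degree $(2p,2q)$ first, or by elevating each product term at the end. Either way the conclusion is the same, since only the upper bound on the bi-degree is needed.
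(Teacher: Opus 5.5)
Your proposal is correct and follows essentially the same route as the paper's proof: write the rational derivatives as $\mathbf{U}^e/(W^e)^2$ and $\mathbf{V}^e/(W^e)^2$ with bi-degrees $(2p-1,2q)$ and $(2p,2q-1)$, extract the factor $(W^e)^{-8}$ from the $2\times 2$ determinant, and use the Bernstein product rule to place both quartic terms in bi-degree $(8p-2,8q-2)$, which yields the coefficients $D^e_{rs}=E^e_{rs}-F^e_{rs}$. One small bookkeeping note: applying the product rule at the native bi-degrees already lands exactly in $(8p-2,8q-2)$, so no degree elevation is needed, whereas first elevating $\mathbf{U}^e,\mathbf{V}^e$ to $(2p,2q)$ would overshoot to bi-degree $(8p,8q)$ and would then require an exact degree reduction rather than an elevation.
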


\begin{proof}
The result follows from the closure properties of the Bernstein basis under differentiation, multiplication, and linear combinations.

First, the rational derivatives can be written as
\[
\bm{x}^e_\xi = \frac{\mathbf{U}^e}{(W^e)^2}, \quad
\bm{x}^e_\eta = \frac{\mathbf{V}^e}{(W^e)^2},
\]
where $\mathbf{U}^e$ and $\mathbf{V}^e$ are polynomial vector fields admitting Bernstein representations.

Second, using the product rule for Bernstein polynomials, both $\mathbf{U}^e$ and $\mathbf{V}^e$ can be expressed in tensor-product Bernstein bases of bi-degrees $(2p-1,2q)$ and $(2p,2q-1)$, respectively.

Third, the Gram determinant can be written as
\[
\det((\bm{\mathcal{J}}^e)^\top \bm{\mathcal{J}}^e)
=
\frac{1}{(W^e)^8}
\left[
(\mathbf{U}^e\cdot\mathbf{U}^e)
(\mathbf{V}^e\cdot\mathbf{V}^e)
-
(\mathbf{U}^e\cdot\mathbf{V}^e)^2
\right],
\]
and each term remains in a Bernstein space due to closure under inner products and products.

Therefore, the Gram determinant admits a Bernstein expansion of bi-degree $(8p-2,8q-2)$.

The detailed coefficient construction is provided in Appendix \ref{sec:appendix_A}.
\end{proof}

\subsection{A sufficient condition for elementwise regularity}

We begin by recalling two standard results that relate the rank of the Jacobian matrix to local regularity.

\begin{lemma}
Let $A \in \mathbb{R}^{m \times n}$. Then
\begin{equation}
    \operatorname{rank}(A) = \operatorname{rank}(A^T A).
\end{equation}
\end{lemma}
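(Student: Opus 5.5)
The plan is to prove the equality of ranks by showing that $A$ and $A^T A$ have the same null space in $\mathbb{R}^n$, and then to conclude with the rank--nullity theorem. Both matrices act on $\mathbb{R}^n$: $A$ maps into $\mathbb{R}^m$ and $A^T A$ maps into $\mathbb{R}^n$. So $\operatorname{rank}(A) = n - \dim\ker A$ and $\operatorname{rank}(A^T A) = n - \dim\ker(A^T A)$, and equal kernels give equal ranks.

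The inclusion $\ker A \subseteq \ker(A^T A)$ is immediate: if $Ax = 0$, then $A^T A x = A^T 0 = 0$. For the reverse inclusion, take $x$ with $A^T A x = 0$ and multiply on the left by $x^T$:
\[
0 = x^T A^T A x = (Ax)^T (Ax) = \|Ax\|_2^2 .
\]
This forces $Ax = 0$. The argument uses that the entries are real, so that the Euclidean inner product is positive definite. That is exactly the setting of the lemma, and of its later use with $A = \bm{\mathcal{J}}^e \in \mathbb{R}^{3\times 2}$.

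The only step with any content is this reverse inclusion, and positive definiteness of $\|\cdot\|_2^2$ handles it. Over $\mathbb{C}$ one would need $A^{*}A$ instead, but that case is not needed here.

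In the subsequent regularity argument, the lemma will be applied with $A = \bm{\mathcal{J}}^e(\xi,\eta)$. Since $(\bm{\mathcal{J}}^e)^T \bm{\mathcal{J}}^e$ is a $2\times 2$ matrix, $\operatorname{rank}\bm{\mathcal{J}}^e = 2$ if and only if $\det\bigl((\bm{\mathcal{J}}^e)^T\bm{\mathcal{J}}^e\bigr) \neq 0$. This ties the lemma to the Bernstein expansion of Theorem~\ref{thm:gram-bernstein}.
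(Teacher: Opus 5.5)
Your proof is correct and complete. Showing $\ker A = \ker(A^\top A)$ via $x^\top A^\top A x = \|Ax\|_2^2$ and then applying rank--nullity on $\mathbb{R}^n$ is the standard argument, and you correctly identify where the realness of the entries is used.

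The paper gives no proof of its own here; it only recalls the lemma as a standard fact, so there is no competing route to compare. Your closing remark supplies the link the paper actually needs. In the proof of the subsequent theorem, the step from $\operatorname{rank}(\bm{\mathcal{J}}^\top\bm{\mathcal{J}})=2$ to $\operatorname{rank}(\bm{\mathcal{J}})=2$ is attributed to the lemma cited from Knupp. It is really this rank identity that justifies that step.
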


\begin{lemma} \citep{knupp2020fundamentals}
Let $\mathbf{x} : \mathbb{R}^2 \to \mathbb{R}^n$ be a differentiable mapping with Jacobian matrix $\bm{\mathcal{J}}$. If $\bm{\mathcal{J}}$ has full column rank, then $\mathbf{x}$ is locally regular.
\label{lemma1}
\end{lemma}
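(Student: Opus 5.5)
The plan is to reduce the statement to the inverse function theorem by projecting onto a suitable pair of coordinates. Throughout, ``locally regular'' at a point $\mathbf{u}_0=(\xi_0,\eta_0)$ is read as follows: there is a neighbourhood $N$ of $\mathbf{u}_0$ such that $\mathbf{x}|_N$ is injective, is a homeomorphism onto its image, and has a Jacobian of rank $2$ everywhere on $N$, so that it is an embedded regular patch. I will also assume that $\mathbf{x}$ is continuously differentiable. This is automatic for the rational B\'ezier elements considered here wherever $W^e>0$, and it is where the weaker hypothesis ``differentiable'' in the statement has to be strengthened.

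First, fix a point $\mathbf{u}_0$ at which $\bm{\mathcal{J}}(\mathbf{u}_0)$ has full column rank $2$. Then some $2\times 2$ minor of $\bm{\mathcal{J}}(\mathbf{u}_0)$ is nonzero, say the one formed by rows $i$ and $j$. Equivalently, by the preceding lemma, $\det(\bm{\mathcal{J}}^\top\bm{\mathcal{J}})(\mathbf{u}_0)>0$, and by the Cauchy--Binet formula this quantity is the sum of the squares of all $2\times 2$ minors. Let $\pi:\mathbb{R}^n\to\mathbb{R}^2$ be the projection onto coordinates $i$ and $j$, and set $\mathbf{g}=\pi\circ\mathbf{x}:\mathbb{R}^2\to\mathbb{R}^2$. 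Its Jacobian at $\mathbf{u}_0$ is exactly that nonsingular minor.

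Second, apply the inverse function theorem to the $C^1$ map $\mathbf{g}$. This gives an open neighbourhood $N$ of $\mathbf{u}_0$ such that $\mathbf{g}|_N$ is a $C^1$ diffeomorphism onto an open set. Consequently:
\begin{itemize}
\item $\mathbf{x}|_N$ is injective, since $\mathbf{x}(\mathbf{u})=\mathbf{x}(\mathbf{v})$ implies $\mathbf{g}(\mathbf{u})=\mathbf{g}(\mathbf{v})$, and hence $\mathbf{u}=\mathbf{v}$.
\item The inverse of $\mathbf{x}|_N$ on $\mathbf{x}(N)$ is $(\mathbf{g}|_N)^{-1}\circ\pi$, which is continuous, so $\mathbf{x}|_N$ is a homeomorphism onto its image.
\end{itemize}
Finally, the chosen minor is a continuous function of $\mathbf{u}$, so after shrinking $N$ it stays nonzero. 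Hence $\bm{\mathcal{J}}$ has full column rank throughout $N$, and $\mathbf{x}$ is an immersion there.

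The only delicate step is the regularity hypothesis. The inverse function theorem needs the Jacobian to be continuous, and pointwise differentiability alone does not guarantee that $\mathbf{g}$ is locally injective. For the elements treated in this paper, the map $\mathbf{x}^e=\mathbf{F}^e/W^e$ is a quotient of polynomials with $W^e>0$, hence $C^\infty$, so I will state this smoothness assumption explicitly and then invoke the argument above. Choosing the nonsingular minor is immediate once Cauchy--Binet is used, and the remaining steps are routine.
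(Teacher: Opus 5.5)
The paper gives no proof of this lemma. It is quoted from Knupp--Steinberg, and the paper never defines \emph{locally regular}, so there is no argument of the authors' to compare with. Your proof is correct, and it is the classical argument used to show that a regular parametrized surface is locally an embedded surface. A nonvanishing $2\times 2$ minor of $\bm{\mathcal{J}}(\mathbf{u}_0)$ selects a coordinate projection $\pi$. The inverse function theorem applied to $\mathbf{g}=\pi\circ\mathbf{x}$ then gives a neighbourhood $N$ on which $\mathbf{g}$ is a diffeomorphism. From this, injectivity of $\mathbf{x}|_N$, continuity of the inverse $(\mathbf{g}|_N)^{-1}\circ\pi$, and full rank on $N$ follow at once. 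The full-rank step does not even require shrinking $N$, since $D\mathbf{g}$ is invertible throughout a neighbourhood on which $\mathbf{g}$ is a diffeomorphism. The detour through the Gram determinant and Cauchy--Binet is not needed, but it usefully ties the argument to the quantity the paper actually computes.

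Two remarks. First, your strengthening from ``differentiable'' to $C^1$ is necessary under your reading, not a technicality. Take $\mathbf{x}(\xi,\eta)=(f(\xi),\eta,0,\dots,0)$ with $f(\xi)=\xi+2\xi^2\sin(1/\xi)$ and $f(0)=0$. Then $\bm{\mathcal{J}}(0,0)$ has full column rank, yet $f'$ takes the values $-1$ and $3$ at points accumulating at $0$. Hence $\mathbf{x}$ is injective on no neighbourhood of the origin. For the paper's elements the assumption is harmless. Since $W^e>0$ on the compact set $[0,1]^2$, it stays positive on an open neighbourhood. So $\mathbf{x}^e$ extends smoothly there, and your argument also covers points on the boundary of the element.

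Second, the body of the paper uses ``regular'' as a synonym for ``$\bm{\mathcal{J}}$ has full column rank'' (see the last sentence of Theorem~\ref{cor:regularity}). Under that reading the lemma is essentially definitional. What you prove, local injectivity and a local embedding, is strictly stronger, and it is the property the bijectivity discussion actually relies on.
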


The following result provides a convenient criterion for verifying local regularity through the Gram determinant.

\begin{theorem}
Let $\mathbf{x}(u,v)$ be a parametric surface with Jacobian matrix
\begin{equation}
    \bm{\mathcal{J}} = (\mathbf{x}_u \ \mathbf{x}_v).
\end{equation}
If
\begin{equation}
    \det(\bm{\mathcal{J}}^\top \bm{\mathcal{J}}) \neq 0,
\end{equation}
then $\mathbf{x}$ is locally regular.
\end{theorem}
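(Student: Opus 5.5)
The plan is to chain the two lemmas just recalled, working pointwise at a fixed parameter $(u,v)$. The Gram matrix $\bm{\mathcal{J}}^\top\bm{\mathcal{J}}$ is a $2\times 2$ matrix. The hypothesis $\det(\bm{\mathcal{J}}^\top\bm{\mathcal{J}})\neq 0$ says exactly that it is nonsingular, so $\operatorname{rank}(\bm{\mathcal{J}}^\top\bm{\mathcal{J}})=2$. Applying the first lemma with $A=\bm{\mathcal{J}}\in\mathbb{R}^{n\times 2}$ gives $\operatorname{rank}(\bm{\mathcal{J}})=\operatorname{rank}(\bm{\mathcal{J}}^\top\bm{\mathcal{J}})=2$. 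Hence $\bm{\mathcal{J}}$ has full column rank, and Lemma~\ref{lemma1} yields local regularity of $\mathbf{x}$ at that point.

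To keep the argument self-contained, I would also note why the first lemma holds, since it is the only nontrivial ingredient. If $A^\top A\,\mathbf{z}=\mathbf{0}$, then $\|A\mathbf{z}\|^2=\mathbf{z}^\top A^\top A\mathbf{z}=0$, so $A\mathbf{z}=\mathbf{0}$. Therefore $\ker A=\ker A^\top A$, and rank--nullity gives equal ranks.

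As a sanity check, for $n=3$ I would remark that the Lagrange identity gives
\[
\det(\bm{\mathcal{J}}^\top\bm{\mathcal{J}})=\|\mathbf{x}_u\|^2\|\mathbf{x}_v\|^2-(\mathbf{x}_u\cdot\mathbf{x}_v)^2=\|\mathbf{x}_u\times\mathbf{x}_v\|^2 .
\]
So the nonvanishing condition is equivalent to $\mathbf{x}_u$ and $\mathbf{x}_v$ being linearly independent, i.e.\ a well-defined surface normal, which is the classical regularity condition.

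There is no real obstacle here. The only point needing care is bookkeeping of the quantifiers. The hypothesis is meant to hold at every point of the domain (or at the point in question), and the conclusion is the corresponding pointwise or domainwise statement. This is the form later combined with Theorem~\ref{thm:gram-bernstein}, where positivity of the Bernstein coefficients $D^e_{rs}$ and $W^e>0$ guarantee the hypothesis on each whole element.
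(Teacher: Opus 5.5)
Your proposal is correct and follows the paper's proof essentially verbatim. Both argue that $\det(\bm{\mathcal{J}}^\top\bm{\mathcal{J}})\neq 0$ gives $\operatorname{rank}(\bm{\mathcal{J}}^\top\bm{\mathcal{J}})=2$, the rank identity then gives full column rank of $\bm{\mathcal{J}}$, and Lemma~\ref{lemma1} gives local regularity — you attribute the rank step to the correct (first, unlabeled) lemma, whereas the paper's text cites Lemma~\ref{lemma1} for it — with one small correction to your closing remark: the coefficient condition $D^e_{rs}\ge 0$ with one positive coefficient only guarantees the hypothesis on the open element $(0,1)^2$, not on the whole closed element.
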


\begin{proof}
If $\det(\bm{\mathcal{J}}^\top \bm{\mathcal{J}}) \neq 0$, then $\bm{\mathcal{J}}^\top \bm{\mathcal{J}}$ is nonsingular and hence full rank. 
It follows that $\operatorname{rank}(\bm{\mathcal{J}}^\top \bm{\mathcal{J}}) = 2$. 
By Lemma~\ref{lemma1}, we obtain $\operatorname{rank}(\bm{\mathcal{J}}) = 2$, i.e., $\bm{\mathcal{J}}$ has full column rank. 
Therefore, the mapping is locally regular.
\end{proof}

We now apply this general result to the Bernstein representation of the Gram determinant derived in the previous section. Recall that the elementwise Gram determinant $W^e(\xi,\eta)$ admits a Bernstein expansion with coefficients $D_{rs}^e$. The positivity of $W^e$ can therefore be analyzed through the sign of its Bernstein coefficients.

Without loss of generality, we restrict our discussion to the case where the Bernstein coefficients are nonnegative, as the analysis for the negative case follows analogously.

\begin{theorem}[Sufficient condition for elementwise regularity]
\label{cor:regularity}
Assume that $W^e(\xi,\eta)>0$ on $[0,1]^2$. 
If all Bernstein coefficients in \eqref{eq:bernstein_coefficients} satisfy
\begin{equation}
D_{rs}^e \ge 0,
\qquad
\text{and at least one } D_{rs}^e > 0,
\end{equation}
then
\begin{equation}
\det\!\big((\bm{\mathcal{J}}^e)^\top\bm{\mathcal{J}}^e\big)(\xi,\eta)>0,
\qquad
(\xi,\eta)\in(0,1)^2.
\end{equation}
Consequently, $\bm{\mathcal{J}}^e(\xi,\eta)$ has full column rank on the interior of the element, and the parametrization $\bm{x}^e$ is regular on $(0,1)^2$.
\end{theorem}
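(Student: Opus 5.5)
The plan is to use the Bernstein expansion of Theorem~\ref{thm:gram-bernstein},
\[
\det\!\big((\bm{\mathcal{J}}^e)^\top\bm{\mathcal{J}}^e\big)
=\frac{1}{(W^e)^8}\,G^e,
\qquad
G^e(\xi,\eta)=\sum_{r=0}^{8p-2}\sum_{s=0}^{8q-2} D^e_{rs}\,B_r^{8p-2}(\xi)B_s^{8q-2}(\eta),
\]
and reduce the claim to strict positivity of the polynomial numerator $G^e$ on the open square.

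Since $W^e>0$ on $[0,1]^2$ by assumption, the factor $(W^e)^{-8}$ is finite and strictly positive there. Hence the Gram determinant and $G^e$ have the same sign pointwise, and it suffices to prove $G^e(\xi,\eta)>0$ for $(\xi,\eta)\in(0,1)^2$.

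For positivity of $G^e$, use the fact that on the open interval each univariate Bernstein polynomial satisfies
\[
B_r^n(t)=\binom{n}{r}t^r(1-t)^{n-r}>0 \qquad (0<t<1).
\]
Consequently every tensor-product term $B_r^{8p-2}(\xi)B_s^{8q-2}(\eta)$ is strictly positive on $(0,1)^2$.

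\begin{itemize}
\item Every summand $D^e_{rs}B_r^{8p-2}(\xi)B_s^{8q-2}(\eta)$ is nonnegative, because $D^e_{rs}\ge 0$.
\item The summand with the index $(r_0,s_0)$ for which $D^e_{r_0s_0}>0$ is strictly positive.
\item Therefore $G^e>0$ on $(0,1)^2$.
\end{itemize}

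This is also why the statement only asserts positivity on the open square. On the boundary some Bernstein basis functions vanish; if the only positive coefficients sit there, $G^e$ can be zero at a corner or on an edge.

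To conclude, apply the preceding theorem, which says that $\det(\bm{\mathcal{J}}^\top\bm{\mathcal{J}})\neq0$ implies local regularity. This is done via the rank identity $\operatorname{rank}(A)=\operatorname{rank}(A^\top A)$: the $2\times2$ Gram matrix is nonsingular, so $\bm{\mathcal{J}}^e$ has rank $2$, i.e.\ full column rank, at every interior point. Regularity of $\bm{x}^e$ on $(0,1)^2$ then follows from Lemma~\ref{lemma1}.

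There is no real obstacle here. The only care points are the following.
\begin{itemize}
\item Keep the denominator positivity hypothesis explicit. Note that $(W^e)^8>0$ actually needs only $W^e\neq0$, but the positivity assumption also ensures $\bm{x}^e$ is well defined.
\item Do not confuse $W^e$ with the numerator $G^e$, as the surrounding text does.
\item Justify strict positivity of the Bernstein basis on the open domain rather than appealing to the convex hull property alone, since the convex hull property gives only $G^e\ge0$.
\end{itemize}
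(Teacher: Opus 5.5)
Your proposal is correct and follows essentially the same route as the paper: write the Gram determinant as $N^e/(W^e)^8$, use strict positivity of each tensor-product Bernstein basis function on $(0,1)^2$ together with $D^e_{rs}\ge 0$ and one strictly positive coefficient, and then use $(W^e)^8>0$. The only cosmetic difference is the last step: the paper argues that a $2\times 2$ Gram matrix with positive determinant is positive definite, whereas you use the rank identity $\operatorname{rank}(A)=\operatorname{rank}(A^\top A)$, and both give full column rank immediately.
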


\begin{proof}
By Theorem~\ref{thm:gram-bernstein}, the Gram determinant admits the representation
\begin{equation}
\det\!\big((\bm{\mathcal{J}}^e)^\top\bm{\mathcal{J}}^e\big)(\xi,\eta)
=
\frac{N^e(\xi,\eta)}{(W^e(\xi,\eta))^8},
\end{equation}
where
\begin{equation}
N^e(\xi,\eta)
=
\sum_{r=0}^{8p-2}\sum_{s=0}^{8q-2}
D_{rs}^e\, B_r^{8p-2}(\xi) B_s^{8q-2}(\eta).
\label{eq:numerator-Ne}
\end{equation}

We first establish the positivity of the numerator $N^e(\xi,\eta)$. 
For any $\xi\in(0,1)$ and any $r=0,\dots,8p-2$, the univariate Bernstein polynomial
\begin{equation}
B_r^{8p-2}(\xi)=\binom{8p-2}{r}\xi^r(1-\xi)^{8p-2-r}
\end{equation}
is strictly positive. Likewise, for any $\eta\in(0,1)$ and any $s=0,\dots,8q-2$,
\begin{equation}
B_s^{8q-2}(\eta)=\binom{8q-2}{s}\eta^s(1-\eta)^{8q-2-s}
\end{equation}
is strictly positive. Hence, each tensor-product basis function
\begin{equation}
B_r^{8p-2}(\xi)B_s^{8q-2}(\eta)
\end{equation}
is strictly positive on $(0,1)^2$.

Since $D_{rs}^e\ge 0$ for all $(r,s)$ and at least one coefficient is strictly positive, the expansion \eqref{eq:numerator-Ne} is a nonnegative linear combination of strictly positive functions with at least one strictly positive weight. Therefore,
\begin{equation}
N^e(\xi,\eta)>0,
\qquad
(\xi,\eta)\in(0,1)^2.
\end{equation}

Next, by assumption, $W^e(\xi,\eta)>0$ on $[0,1]^2$, and hence
\begin{equation}
(W^e(\xi,\eta))^8>0
\qquad\text{on }[0,1]^2.
\end{equation}
It follows that
\begin{equation}
\det\!\big((\bm{\mathcal{J}}^e)^\top\bm{\mathcal{J}}^e\big)(\xi,\eta)>0,
\qquad
(\xi,\eta)\in(0,1)^2.
\end{equation}

Finally, since $(\bm{\mathcal{J}}^e)^\top\bm{\mathcal{J}}^e$ is a $2\times 2$ Gram matrix, 
the positivity of its determinant implies that it is positive definite. 
Therefore, the column vectors of $\bm{\mathcal{J}}^e$ are linearly independent, and
\begin{equation}
\operatorname{rank}(\bm{\mathcal{J}}^e(\xi,\eta))=2,
\qquad
(\xi,\eta)\in(0,1)^2.
\end{equation}
This establishes the regularity of the parametrization on the interior of the element.
\end{proof}

\begin{remark}
The condition in Corollary~\ref{cor:regularity} is sufficient but not necessary. The strict positivity of the polynomial $N^e(\xi,\eta)$ on $(0,1)^2$ does not require all Bernstein coefficients to be nonnegative. Indeed, cancellations among Bernstein basis functions may yield a strictly positive polynomial even when some coefficients $D_{rs}^e$ are negative.

Therefore, failure of the coefficient sign condition does not imply loss of regularity; it only indicates that regularity cannot be certified by this criterion. In practice, such cases may be further examined using subdivision or more refined tests.
\end{remark}

\subsection{Necessary sign conditions for bijectivity}

While the sufficient condition in Corollary~\ref{cor:regularity} provides a convenient certificate for regularity, it is generally not sharp. 
To efficiently detect invalid parametrizations, it is useful to derive simple \emph{necessary conditions} that must be satisfied by any injective mapping.

We first recall a classical result for non-rational B\'ezier surfaces.

\begin{theorem}[Necessary corner-sign condition]
\label{thm:corner}
Let a non-rational B\'ezier surface of degree $(m,n)$ be given by
\begin{equation}
\mathbf{S}(u,v)
=
\sum_{i=0}^{m}\sum_{j=0}^{n}
\mathbf{P}_{ij} B_i^m(u) B_j^n(v),
\qquad (u,v)\in[0,1]^2.
\end{equation}
Let
\begin{equation}
    \det\!\big((\bm{\mathcal{J}})^\top \bm{\mathcal{J}}\big)(u,v)
    =
    \sum_{i=0}^{4m-2}\sum_{j=0}^{4n-2}
    D_{ij} B_i^{4m-2}(u) B_j^{4n-2}(v)
\end{equation}
be its Bernstein representation. 

If the parametrization $\mathbf{S}$ is injective on $[0,1]^2$, then the four corner coefficients
\begin{equation}
    D_{0,0},\quad D_{4m-2,0},\quad D_{0,4n-2},\quad D_{4m-2,4n-2}
\end{equation}
must have the same (nonnegative) sign.
\end{theorem}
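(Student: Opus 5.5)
The plan rests on two facts. A tensor-product Bernstein polynomial interpolates its corner coefficients. The Gram determinant of a $2$-column Jacobian is pointwise a sum of squares. Together these make the four corner coefficients equal to the values of a nonnegative function at the corners of $[0,1]^2$. Injectivity will only be used, if at all, to sharpen the conclusion from "nonnegative" toward "positive" under a regularity assumption.

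\emph{Step 1 (degree bookkeeping).} First I check that the expansion in the statement is consistent. Here $\mathbf{S}_u$ has bi-degree $(m-1,n)$ and $\mathbf{S}_v$ has bi-degree $(m,n-1)$. Hence $(\mathbf{S}_u\cdot\mathbf{S}_u)(\mathbf{S}_v\cdot\mathbf{S}_v)$ and $(\mathbf{S}_u\cdot\mathbf{S}_v)^2$ both have bi-degree $(4m-2,4n-2)$, in the same way as in the proof of Theorem~\ref{thm:gram-bernstein} with $W^e\equiv 1$. So the coefficients $D_{ij}$ are well defined and unique, by linear independence of the tensor-product Bernstein basis. If a lower-degree representation were used, degree elevation would leave corner coefficients unchanged, so the choice of degree does not matter.

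\emph{Step 2 (corner interpolation).} Since $B_i^{k}(0)=\delta_{i0}$ and $B_i^{k}(1)=\delta_{ik}$, evaluating the expansion at the four corners gives
\[
D_{0,0}=G(0,0),\quad D_{4m-2,0}=G(1,0),\quad D_{0,4n-2}=G(0,1),\quad D_{4m-2,4n-2}=G(1,1),
\]
where $G=\det(\bm{\mathcal{J}}^\top\bm{\mathcal{J}})$.

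\emph{Step 3 (sign).} By the Cauchy--Schwarz inequality (equivalently, by Lagrange's identity $G=\|\mathbf{S}_u\times\mathbf{S}_v\|^2$ in $\mathbb{R}^3$, or the Cauchy--Binet sum of squared $2\times2$ minors in general dimension), $G\ge 0$ everywhere. Hence all four corner coefficients are nonnegative and share a common sign, which is the claim. The rational case follows identically, since $(W^e)^8>0$.

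The main obstacle is conceptual rather than computational. As literally stated, the nonnegativity holds for every parametrization, so injectivity is not what drives the argument. The meaningful content is the contrapositive used in practice: a strictly negative corner coefficient signals an error in the coefficient computation, or an orientation-flipped signed quantity if one works with the signed normal component instead of $G$. To give injectivity a genuine role, I would state a strengthened version: if $\mathbf{S}$ is additionally regular at the corners (full-rank Jacobian, which Theorem~\ref{cor:regularity} certifies in the interior), then the corner coefficients are strictly positive. A vanishing corner coefficient then marks a corner degeneracy to be examined by subdivision. I would present the proof of the stated theorem via Steps 1--3 and add this strengthening as a remark.
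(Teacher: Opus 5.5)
Your proof is correct, but it reaches the sign by a different route than the paper. Both arguments share the corner-interpolation step ($D_{0,0}=G(0,0)$, etc.). The paper then tries to extract the sign from injectivity. It asserts that an injective $C^1$ map is locally invertible in the interior, hence $G>0$ on $(0,1)^2$, and passes to the corners by continuity to get $G\ge 0$ there. You instead note that $G=\|\mathbf{S}_u\|^2\|\mathbf{S}_v\|^2-(\mathbf{S}_u\cdot\mathbf{S}_v)^2\ge 0$ pointwise by Cauchy--Schwarz, so the conclusion holds for every parametrization and the hypothesis is never used.

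Your route is both shorter and sounder. The paper's intermediate claim is false as stated, because injectivity does not force full column rank. For example, the bi-degree $(3,1)$ patch $\mathbf{S}(u,v)=((u-\frac{1}{2})^3,v,0)$ is injective, yet $G=9(u-\frac{1}{2})^4$ vanishes on the interior segment $u=\frac{1}{2}$. The paper's conclusion survives only because the weak corner inequality it ends with is exactly what Cauchy--Schwarz gives for free.

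Your approach also makes visible two things the paper's route obscures. First, with the unsigned Gram determinant the corner test can never reject a patch in exact arithmetic. Second, ``same sign'' can only mean ``all nonnegative'': the injective patch $\mathbf{S}(u,v)=(u^2,v,0)$ has corner values $0,4,0,4$. So a genuine fold or orientation test would require a signed quantity, as you say.

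One caveat about your planned remark. Regularity at a corner is by definition $G>0$ there, so the strict-positivity strengthening is a tautology and still gives injectivity no role. Present it as a statement about corner regularity, not as a way of making the injectivity hypothesis matter.
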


\begin{proof}
If $\mathbf{S}$ is injective and continuously differentiable, then it is locally invertible in the interior of the domain. 
Hence, the Jacobian matrix $\bm{\mathcal{J}}(u,v)$ has full column rank and
\begin{equation}
    \det\!\big((\bm{\mathcal{J}})^\top \bm{\mathcal{J}}\big)(u,v) > 0
    \qquad \text{for all } (u,v)\in(0,1)^2.
\end{equation}

By the endpoint interpolation property of Bernstein polynomials, the corner coefficients coincide with the values of the polynomial at the four corners:
\begin{equation}
    D_{0,0} = \det\!\big((\bm{\mathcal{J}})^\top \bm{\mathcal{J}}\big)(0,0), \quad
    D_{4m-2,0} = \det\!\big((\bm{\mathcal{J}})^\top \bm{\mathcal{J}}\big)(1,0),
\end{equation}
\begin{equation}
    D_{0,4n-2} = \det\!\big((\bm{\mathcal{J}})^\top \bm{\mathcal{J}}\big)(0,1), \quad
    D_{4m-2,4n-2} = \det\!\big((\bm{\mathcal{J}})^\top \bm{\mathcal{J}}\big)(1,1).
\end{equation}

If any of these values were negative while the determinant is strictly positive in the interior, 
then by continuity there would exist a point in the domain where the determinant vanishes, 
contradicting local invertibility. 
Therefore, all corner coefficients must be nonnegative and share the same sign.
\end{proof}

\begin{remark}
The corner coefficients represent the values of the Gram determinant at the four vertices of the parameter domain. The condition in Theorem~\ref{thm:corner} therefore admits a clear geometric interpretation: the local area distortion at the four corners must be consistent in sign. A violation of this condition indicates the presence of a fold or orientation reversal.

This condition is necessary but not sufficient. Even if all corner coefficients are nonnegative, the determinant may still vanish or change sign in the interior. Nevertheless, the test is extremely inexpensive and provides an effective first-stage filter for detecting invalid patches.
\end{remark}

For rational B\'ezier patches, the Gram determinant admits the representation derived in Theorem~\ref{thm:gram-bernstein}. Although the presence of the weight function introduces a rational denominator, the numerator
\begin{equation}
N^e(\xi,\eta)
=
\sum_{r,s} D_{rs}^e B_r(\xi) B_s(\eta)
\end{equation}
remains a polynomial in Bernstein form.

An analogous sign-consistency condition can therefore be imposed on the corner coefficients of $D_{rs}^e$. In particular, any inconsistency in their signs implies that the Gram determinant must vanish somewhere in the domain, and hence the parametrization cannot be injective.

This observation forms the basis of the rejection criterion used in the algorithm presented in the next subsection.

\subsection{Subdivision-based hierarchical checking}

When neither the sufficient condition in Corollary~\ref{cor:regularity} nor the necessary condition in Theorem~\ref{thm:corner} yields a conclusive result, we employ a subdivision-based hierarchical strategy to further analyze the parametrization.

Given a rational B\'ezier patch defined over $[0,1]^2$, the de Casteljau algorithm enables an exact tensor-product subdivision into four subpatches over smaller parametric domains. Each subpatch remains a rational B\'ezier surface of the same degree, with control points obtained through affine combinations in homogeneous coordinates. Importantly, this subdivision preserves the geometry exactly, such that the union of the subpatches represents the same surface as the original patch.

Let
\begin{equation}
N^e(\xi,\eta)
=
\sum_{r,s} D_{rs}^e \, B_r(\xi) B_s(\eta)
\end{equation}
denote the Bernstein representation of the numerator of the Gram determinant on a given patch. After subdivision, each subpatch admits its own Bernstein representation with coefficients $\widetilde{D}_{rs}$ obtained from $D_{rs}^e$ via repeated affine combinations. These refined coefficients encode increasingly localized information about the behavior of $N^e$ over the corresponding subdomains.

\begin{figure}
    \centering
    \subfigure[Original rational B\'ezier patch]{
        \includegraphics[width=0.45\linewidth]{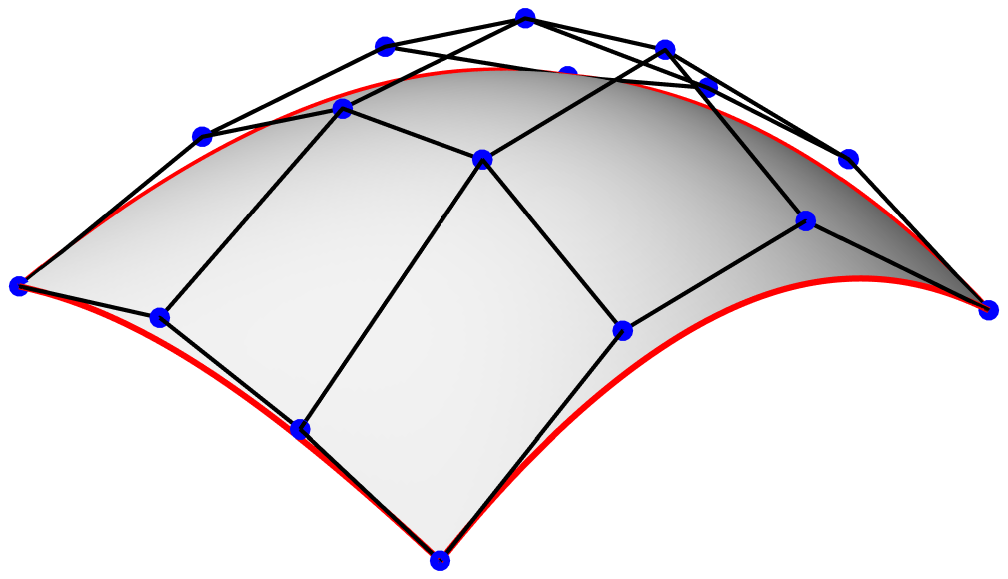}
        \label{fig:bezier_subdivision_original}}
    \quad
    \subfigure[Four subpatches after tensor-product subdivision]{
        \includegraphics[width=0.45\linewidth]{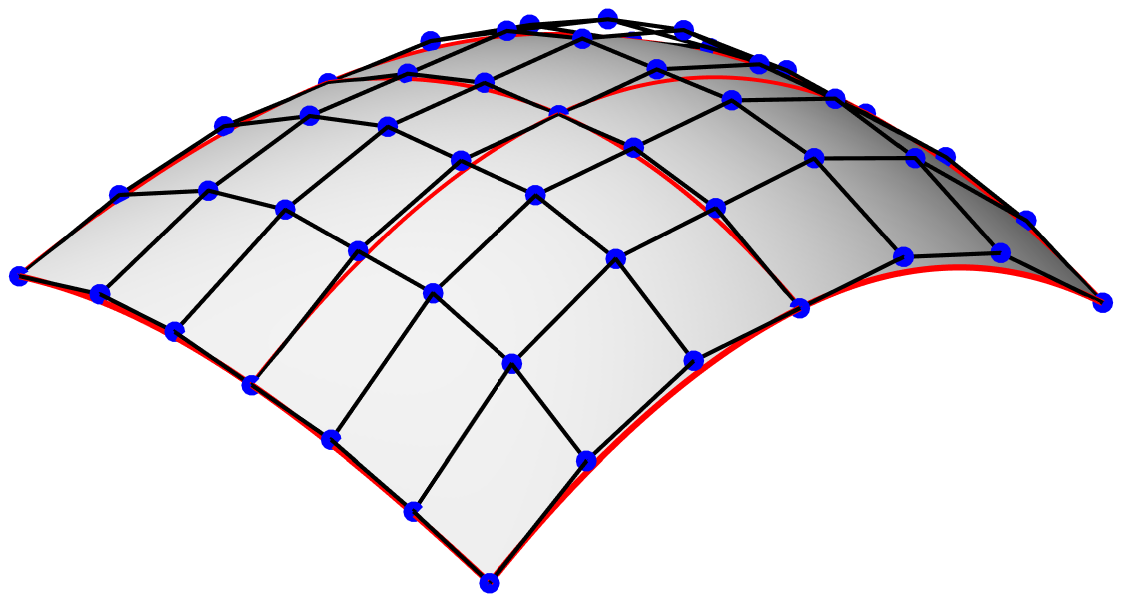}
        \label{fig:bezier_subdivision_result}}
    \caption{Tensor-product subdivision of a rational B\'ezier surface. 
    The original patch is subdivided at $u=0.5$ and $v=0.5$ in the parametric directions using the de Casteljau algorithm in homogeneous coordinates, producing four rational B\'ezier subpatches. 
    The subdivision preserves the exact geometry and forms the basis of the hierarchical refinement strategy for bijectivity analysis.}
    \label{fig:bezier_subdivision}
\end{figure}

Figure~\ref{fig:bezier_subdivision} illustrates the subdivision process. By applying the de Casteljau algorithm at $u=0.5$ and $v=0.5$ in homogeneous coordinates, the original patch is decomposed into four subpatches that preserve the exact geometry while enabling localized analysis.

The effectiveness of this approach relies on the convex hull property of Bernstein polynomials. As the parameter domain is subdivided, the Bernstein coefficients provide increasingly sharp bounds on the underlying function. In particular, if $N^e(\xi,\eta)$ is strictly positive but exhibits mixed-sign coefficients at a coarse level, then after sufficient subdivision the coefficients on subpatches tend to become sign-consistent. Conversely, if $N^e$ vanishes or changes sign, this behavior will be revealed at some refinement level through the appearance of non-positive coefficients.

Based on these properties, we adopt a hierarchical checking procedure. Starting from the initial set of B\'ezier patches, patches satisfying the sufficient condition are accepted, those violating the necessary condition are rejected, and the remaining patches are recursively subdivided and re-evaluated. This process naturally defines a tree structure over the parameter domain, where each level corresponds to a finer resolution.

The procedure terminates when all patches are classified, when an invalid patch is detected, or when a prescribed maximum refinement level is reached. Although termination cannot be guaranteed in all ambiguous cases, in practice only a small number of refinement levels is typically required to obtain a conclusive result.

The above hierarchical procedure is not merely heuristic. Its effectiveness can be theoretically justified for strictly regular patches. Intuitively, as the parameter domain is refined, the Bernstein coefficients provide increasingly accurate local bounds of the underlying function. Consequently, if the numerator of the Gram determinant is uniformly positive, the coefficients on sufficiently small subpatches must eventually become strictly positive. The following theorem formalizes this asymptotic sign consistency under subdivision.

\begin{theorem}[Asymptotic sign consistency under subdivision]
\label{thm:subdivision_sign_consistency}
Let $\mathbf{x}^e : [0,1]^2 \to \mathbb{R}^3$ be a rational B\'ezier patch with positive weights, and let
\[
N^e(\xi,\eta)
=
\sum_{r=0}^{m}\sum_{s=0}^{n} D_{rs}^e \, B_r^m(\xi) B_s^n(\eta)
\]
be the Bernstein representation of the numerator of the Gram determinant associated with $\mathbf{x}^e$.
Assume that $\mathbf{x}^e$ is regular on $[0,1]^2$, and that there exists a constant $\rho>0$ such that
\[
N^e(\xi,\eta)\ge \rho,
\qquad \forall (\xi,\eta)\in [0,1]^2.
\]

For each refinement level $\ell\in\mathbb{N}$, uniformly subdivide $[0,1]^2$ into $2^\ell\times 2^\ell$ subdomains, and let
\[
N^{e,\ell,\alpha}(\xi,\eta)
=
\sum_{r=0}^{m}\sum_{s=0}^{n} \widetilde D_{rs}^{e,\ell,\alpha}\, B_r^m(\xi) B_s^n(\eta)
\]
denote the Bernstein representation of the restriction of $N^e$ to a subpatch indexed by $\alpha$.
Then there exists $\ell_0\in\mathbb{N}$ such that for every $\ell\ge \ell_0$, every subpatch $\alpha$, and every pair $(r,s)$,
\[
\widetilde D_{rs}^{e,\ell,\alpha} > 0.
\]
In other words, after sufficiently many subdivision steps, all Bernstein coefficients on every subpatch become strictly positive.
\end{theorem}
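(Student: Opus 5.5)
The plan is to prove a quantitative convergence estimate: on a subpatch of side length $h=2^{-\ell}$, the Bernstein coefficients of the restricted polynomial differ from the values of $N^e$ at the corresponding Greville (grid) points by at most $C h^2$. Here $C$ depends only on $N^e$ and the degrees $(m,n)$, not on $\ell$ or $\alpha$. Once this holds, each coefficient satisfies $\widetilde D_{rs}^{e,\ell,\alpha} \ge N^e(\text{grid point}) - Ch^2 \ge \rho - C 4^{-\ell}$. Choosing $\ell_0$ with $C4^{-\ell_0}<\rho$ then gives strict positivity of every coefficient, uniformly over all subpatches. Regularity of $\mathbf{x}^e$ and positivity of the weights are only used to guarantee that $N^e$ is a well-defined polynomial with $N^e\ge\rho$, which is assumed.

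\textbf{Step 1 (reparametrization).} Let $\alpha$ index the subdomain $[a,a+h]\times[b,b+h]$. Then $N^{e,\ell,\alpha}(\xi,\eta)=N^e(a+h\xi,b+h\eta)$ for $(\xi,\eta)\in[0,1]^2$. This is a polynomial of bi-degree $(m,n)$, and its Bernstein coefficients are exactly the $\widetilde D_{rs}^{e,\ell,\alpha}$ produced by the de Casteljau subdivision. These coefficients are given by the blossom (polar form) $\mathcal{N}$ of $N^e$:
\[
\widetilde D_{rs}^{e,\ell,\alpha}=\mathcal{N}\big(\underbrace{a,\dots,a}_{m-r},\underbrace{a+h,\dots,a+h}_{r};\ \underbrace{b,\dots,b}_{n-s},\underbrace{b+h,\dots,b+h}_{s}\big).
\]

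\textbf{Step 2 (the key estimate).} I expand the blossom around the grid point $(\bar\xi,\bar\eta)=(a+\tfrac{r}{m}h,\,b+\tfrac{s}{n}h)$. The blossom is multi-affine and symmetric, and its diagonal is $N^e$. Writing each argument as $\bar\xi+\delta_i$ with $\sum_i\delta_i=0$ and $|\delta_i|\le h$, the first-order terms cancel because the $\delta_i$ sum to zero; the same happens in $\eta$. Every remaining term contains at least two factors $\delta$, each bounded by $h$, multiplied by a mixed partial derivative of $N^e$ scaled by falling-factorial constants. This yields
\[
\big|\widetilde D_{rs}^{e,\ell,\alpha}-N^e(\bar\xi,\bar\eta)\big|\le C h^2,
\]
with $C$ determined by the sup norms of the derivatives of $N^e$ on $[0,1]^2$ and by $m,n$. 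This is the classical quadratic convergence of Bernstein–Bézier control nets under subdivision.

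A cruder route also suffices, because only some $o(1)$ bound is needed. By the mean value theorem each blossom argument moves by at most $h$, and the blossom is Lipschitz on $[0,1]^{m+n}$ with constant $L$ depending only on the coefficients $D^e_{rs}$. This gives $|\widetilde D_{rs}-N^e(a,b)|\le (m+n)Lh$, hence $\widetilde D_{rs}\ge \rho-(m+n)L2^{-\ell}$, which already proves the theorem. I will use this linear version as the main argument and remark on the $h^2$ refinement.

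\textbf{Main obstacle.} The hardest step is making the uniform constant explicit and independent of $\alpha$. I will handle it by bounding the blossom's Lipschitz constant on the unit cube directly from the coefficients. Each partial derivative of $\mathcal{N}$ with respect to one argument is a blossom of a scaled derivative of $N^e$, so its size is bounded by $m\max_{r,s}|D^e_{rs}|$ or $n\max_{r,s}|D^e_{rs}|$, since Bernstein coefficients of a derivative are scaled differences of the original coefficients. The resulting bound is independent of the subpatch, and the conclusion follows with $\ell_0=\lceil\log_2((m+n)L/\rho)\rceil+1$.
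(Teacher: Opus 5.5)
Your proposal is correct and takes essentially the paper's route: both express the subpatch coefficients as blossom values at the subinterval endpoints, then use continuity of the blossom on $[0,1]^{m+n}$ to compare them with a value of $N^e\ge\rho$. The paper uses qualitative uniform continuity around the midpoint with $\varepsilon=\rho/2$; your Lipschitz bound around $(a,b)$ makes $\ell_0$ explicit, and your Greville-point expansion adds the $O(h^2)$ rate. (Minor point on constants: the per-argument Lipschitz constant of the blossom is $\max_{r,s}|D^e_{r+1,s}-D^e_{rs}|\le 2\max_{r,s}|D^e_{rs}|$, which your $m\max_{r,s}|D^e_{rs}|$ dominates only for $m\ge 2$, as holds here since $m=8p-2$.)
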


\begin{proof}
Since $N^e$ is a polynomial function on the compact domain $[0,1]^2$, it is continuous. By assumption, there exists $\rho>0$ such that
\[
N^e(\xi,\eta)\ge \rho,
\qquad \forall (\xi,\eta)\in[0,1]^2.
\]

Let $\mathcal{B}^e$ denote the blossom of $N^e$, which is symmetric and multi-affine in its $m+n$ arguments and satisfies
\[
N^e(\xi,\eta)
=
\mathcal{B}^e(\underbrace{\xi,\ldots,\xi}_{m\ \text{times}};
\underbrace{\eta,\ldots,\eta}_{n\ \text{times}}).
\]
For a subdomain
\[
Q=[a,b]\times[c,d]\subset [0,1]^2,
\]
the Bernstein coefficients of the restriction of $N^e$ to $Q$ are given by blossom evaluations of the form
\[
\widetilde D_{rs}
=
\mathcal{B}^e(\underbrace{a,\ldots,a}_{m-r},
\underbrace{b,\ldots,b}_{r};
\underbrace{c,\ldots,c}_{n-s},
\underbrace{d,\ldots,d}_{s}).
\]

Now let $(\bar\xi,\bar\eta)$ be the midpoint of $Q$. Since the blossom $\mathcal{B}^e$ is continuous on the compact set $[0,1]^m\times[0,1]^n$, it is uniformly continuous. Therefore, for $\varepsilon=\rho/2$, there exists $\delta>0$ such that whenever all blossom arguments lie within distance $\delta$ of $(\bar\xi,\bar\eta)$, one has
\[
\left|
\widetilde D_{rs} - N^e(\bar\xi,\bar\eta)
\right|
< \frac{\rho}{2}.
\]

Under uniform subdivision, the diameter of each subdomain tends to zero as $\ell\to\infty$. Hence there exists $\ell_0$ such that for every $\ell\ge \ell_0$, every subdomain $Q$ at level $\ell$ has diameter smaller than $\delta$. It follows that for every coefficient on every such subdomain,
\[
\widetilde D_{rs}
\ge
N^e(\bar\xi,\bar\eta) - \frac{\rho}{2}
\ge
\rho - \frac{\rho}{2}
=
\frac{\rho}{2}
>0.
\]
Therefore, all Bernstein coefficients on every subpatch are strictly positive for all $\ell\ge \ell_0$.
\end{proof}

As a direct consequence, the subdivision-based hierarchical checking procedure admits a finite certification property for strictly regular T-spline patches.

\begin{corollary}[Finite certification for regular T-spline patches]
\label{cor:finite_certification}
Let a rational T-spline surface be decomposed by B\'ezier extraction into finitely many rational B\'ezier patches.
If every extracted patch is regular and its corresponding numerator $N^e$ satisfies
\[
N^e(\xi,\eta) > 0
\qquad \forall (\xi,\eta)\in[0,1]^2,
\]
then there exists a finite subdivision level such that the sufficient condition in Corollary~\ref{cor:regularity} is satisfied on every subdivided subpatch.
Consequently, the subdivision-based hierarchical checking procedure terminates after finitely many refinement steps and certifies regularity.
\end{corollary}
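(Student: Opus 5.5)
The corollary follows by applying Theorem~\ref{thm:subdivision_sign_consistency} to each extracted patch separately and then taking the maximum of the finitely many resulting thresholds. Fix one extracted rational B\'ezier patch $e$. By hypothesis its numerator $N^e$ is a polynomial that is strictly positive on the compact square $[0,1]^2$. Hence it attains a minimum $\rho_e := \min_{[0,1]^2} N^e > 0$. This upgrades the pointwise positivity in the statement to the uniform bound $N^e \ge \rho_e$ required by Theorem~\ref{thm:subdivision_sign_consistency}. The regularity hypothesis and the positivity of the weights, which that theorem also assumes, hold because every extracted patch is regular and the T-spline weights are positive.

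Theorem~\ref{thm:subdivision_sign_consistency} then gives, for each $e$, a level $\ell_0^e$. For every $\ell \ge \ell_0^e$, every subpatch $\alpha$ and every index pair $(r,s)$, the refined coefficients satisfy $\widetilde D^{e,\ell,\alpha}_{rs} > 0$. Bézier extraction produces finitely many patches, so $L := \max_e \ell_0^e$ is finite. At every level $\ell \ge L$ all coefficients on all subpatches of all patches are strictly positive. This is stronger than the hypothesis of Corollary~\ref{cor:regularity}: all coefficients are nonnegative, and at least one is positive.

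To apply Corollary~\ref{cor:regularity} to a subpatch, I also need its weight function to be positive. Subdivision in homogeneous coordinates only restricts the original $W^e$ to a subdomain, and a convex combination of positive weights is positive, so $W^e > 0$ on every subpatch. Corollary~\ref{cor:regularity} then certifies each subpatch, which gives regularity on the interior of each subdomain. The boundaries of the subdomains are covered by the original assumption $N^e > 0$ on the closed square.

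For termination, I will argue that no patch is ever rejected. Every corner coefficient is a value of $N^e$ and so is positive, which means the necessary test of Theorem~\ref{thm:corner} never fires. A patch accepted at an earlier level is not refined further, and any patch still unresolved at level $L$ is accepted there. The recursion tree therefore has depth at most $L$, and the procedure terminates with certification.

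The main obstacle is bookkeeping rather than analysis. I have to check that the subdivision used by the algorithm, which is adaptive with some patches accepted early, is dominated by the uniform $2^\ell \times 2^\ell$ subdivision treated in the theorem. It is: every subpatch the algorithm examines at depth $\ell$ is exactly one of the uniform level-$\ell$ subdomains. I also need the maximum refinement level allowed in the algorithm to be at least $L$, and I will state this as the implicit requirement of the corollary.
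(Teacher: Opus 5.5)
Your proposal is correct and takes the same route as the paper, which presents this corollary as a direct consequence of Theorem~\ref{thm:subdivision_sign_consistency} applied to each of the finitely many extracted patches. You make explicit several steps the paper leaves implicit: the compactness argument giving $\rho_e=\min_{[0,1]^2}N^e>0$, the finite maximum $L=\max_e\ell_0^e$, the fact that corner coefficients are values of $N^e$ so the rejection test never fires, the identification of the algorithm's dyadic subpatches with the uniform level-$\ell$ subdomains (a numerator recomputed from subdivided control points differs from the restriction of $N^e$ only by a positive constant factor, so coefficient signs are unaffected), and the implicit requirement $K_{\max}\ge L$.
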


\subsection{Algorithm for bijectivity checking}

Based on the Bernstein representation derived in Theorem~\ref{thm:gram-bernstein}, together with the sufficient and necessary conditions established in the previous subsections, we propose a practical algorithm for checking the bijectivity of rational T-spline parametrizations.

The central idea is to classify each B\'ezier patch into three categories. A patch is declared \emph{valid} if it satisfies the sufficient condition in Corollary~\ref{cor:regularity}, and \emph{invalid} if it violates the necessary corner condition in Theorem~\ref{thm:corner}. Patches for which neither condition is conclusive are labeled \emph{undetermined} and are further analyzed by subdivision. This leads to a hierarchical decision process in which refinement is restricted to ambiguous regions of the parameter domain.

The algorithm combines three complementary mechanisms: a sufficient condition for immediate certification, a necessary condition for early rejection, and a subdivision strategy that progressively resolves ambiguous cases. As a result, it naturally defines a hierarchical tree over the parameter domain, in which only undecided regions are refined.

In practice, the computational cost is dominated by the evaluation of Bernstein coefficients and the subdivision process. However, most patches can typically be classified at a coarse level: patches with consistent coefficients are accepted immediately, while those violating the corner condition are rejected without further refinement. Consequently, only a small fraction of patches require subdivision, and the overall computational effort remains moderate even for complex geometries.

\begin{algorithm}
\caption{Hierarchical bijectivity checking for rational T-spline parametrizations}
\label{alg:bijectivity_check}
\begin{algorithmic}[1]
\STATE \textbf{Input:} Control points $\mathbf{P}_{ij}$, weights $\omega_{ij}$, degrees $p,q$, maximum refinement level $K_{\max}$
\STATE \textbf{Output:} Global status \texttt{Valid}, \texttt{Invalid}, or \texttt{Undetermined}

\STATE Perform B\'ezier extraction and construct the initial set of rational B\'ezier patches $\mathcal{E}^{(0)}$
\STATE Set $k \gets 0$

\WHILE{$k \le K_{\max}$}
    \STATE Initialize the unresolved patch set $\mathcal{U}^{(k)} \gets \emptyset$
    
    \FOR{each patch $e \in \mathcal{E}^{(k)}$}
        \STATE Compute the Bernstein coefficients $\{D_{rs}^e\}$ of the Gram determinant on $e$
        
        \IF{the corner coefficients of $e$ violate the sign-consistency condition}
            \RETURN \texttt{Invalid}
        \ELSIF{$D_{rs}^e \ge 0$ for all $(r,s)$ and $D_{rs}^e > 0$ for at least one $(r,s)$}
            \STATE Classify $e$ as \texttt{Valid}
        \ELSE
            \STATE Add $e$ to $\mathcal{U}^{(k)}$
        \ENDIF
    \ENDFOR
    
    \IF{$\mathcal{U}^{(k)} = \emptyset$}
        \RETURN \texttt{Valid}
    \ENDIF
    
    \IF{$k = K_{\max}$}
        \RETURN \texttt{Undetermined}
    \ENDIF
    
    \STATE Subdivide each patch in $\mathcal{U}^{(k)}$ to generate the refined patch set $\mathcal{E}^{(k+1)}$
    \STATE Set $k \gets k+1$
\ENDWHILE
\end{algorithmic}
\end{algorithm}

The procedure terminates when all patches are classified, when an invalid patch is detected, or when the maximum refinement level $K_{\max}$ is reached. In the latter case, the algorithm returns \texttt{Undetermined}. Although this situation cannot be excluded in theory, it is rarely encountered in practice, where only a few refinement levels are typically sufficient to reach a conclusive decision.

\section{Numerical experiments}
\label{sec:numerical}

In this section, we present a series of numerical experiments to assess the performance of the proposed bijectivity detection framework. The experiments are designed to validate the correctness of the Bernstein-coefficient representation, evaluate the effectiveness of the proposed sign conditions for classification, and examine the behavior of the subdivision-based hierarchical strategy in resolving ambiguous cases. In addition, we investigate the computational efficiency of the method and its applicability to practical T-spline geometries.

\subsection{Experimental setup}

All experiments are performed on a machine equipped with an Apple M1 Pro CPU and 16~GB of memory. The core numerical components, including B\'ezier extraction, Bernstein coefficient evaluation, and hierarchical subdivision, are implemented in \texttt{C++}, while visualization and post-processing are carried out in \texttt{MATLAB}.

All computations are conducted in double precision. Unless otherwise specified, the maximum subdivision level is set to $K_{\max} = 6$.

\subsection{Validation of the Bernstein-coefficient formulation}

We validate the Bernstein-coefficient representation derived in Theorem~\ref{thm:gram-bernstein} using representative B\'ezier patches, including both polynomial and rational cases.

\begin{table}
\centering
\caption{Control points of the polynomial B\'ezier patch (bi-degree $(3,3)$).}
\label{tab:control-points}
\begin{tabular}{c c c c c c c c c c c c c c c c}
\hline
Index & $x$ & $y$ & $z$ &
Index & $x$ & $y$ & $z$ &
Index & $x$ & $y$ & $z$ &
Index & $x$ & $y$ & $z$ \\
\hline
$(1,1)$ & 0   & 0   & 0 
& $(1,2)$ & 0   & 1/3 & 0 
& $(1,3)$ & 0   & 2/3 & 0 
& $(1,4)$ & 0   & 1   & 0 \\
$(2,1)$ & 1/3 & 0   & 0 
& $(2,2)$ & 1/3 & 1/3 & 1 
& $(2,3)$ & 1/3 & 2/3 & 1 
& $(2,4)$ & 1/3 & 1   & 0 \\
$(3,1)$ & 2/3 & 0   & 0 
& $(3,2)$ & 2/3 & 1/3 & 1 
& $(3,3)$ & 2/3 & 2/3 & 1 
& $(3,4)$ & 2/3 & 1   & 0 \\
$(4,1)$ & 1   & 0   & 0 
& $(4,2)$ & 1   & 1/3 & 0 
& $(4,3)$ & 1   & 2/3 & 0 
& $(4,4)$ & 1   & 1   & 0 \\
\hline
\end{tabular}
\end{table}

For the polynomial case, the control points are defined on a uniform $4\times4$ grid over $[0,1]^2$ (see Table~\ref{tab:control-points}), with all weights set to unity. For the rational case, a nontrivial geometry is constructed by introducing a nonlinear elevation and non-uniform weights:
\begin{equation}
z_{ij}
=
0.5\sin(2\pi x_i)\cos(2\pi y_j)
+ 0.5\,x_i y_j,
\end{equation}
\begin{equation}
w_{ij}
=
1 + 0.3(x_i + 0.5 y_j) + 0.2\sin(\pi x_i y_j).
\end{equation}

\begin{figure}
    \centering
    \subfigure[Polynomial B\'ezier surface]{
        \includegraphics[width=0.29\linewidth]{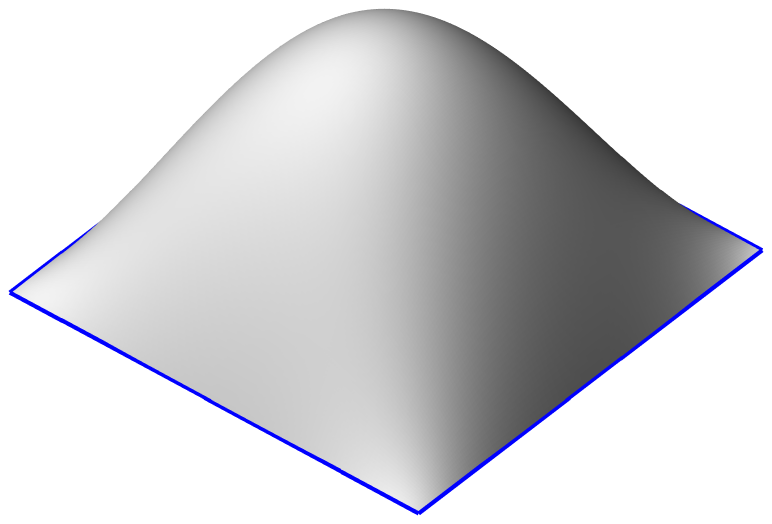}}
    \hfill
    \subfigure[Gram determinant (direct evaluation)]{
        \includegraphics[width=0.32\linewidth]{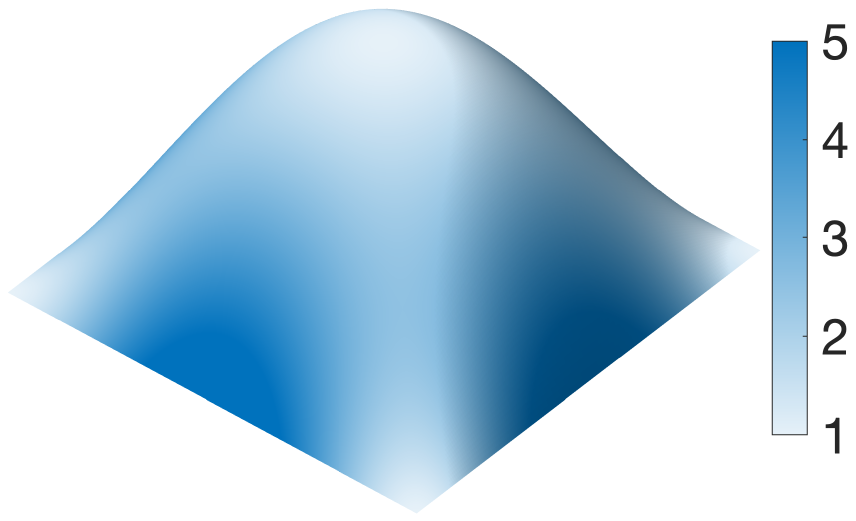}}
    \hfill
    \subfigure[Absolute error (polynomial case)]{
        \includegraphics[width=0.35\linewidth]{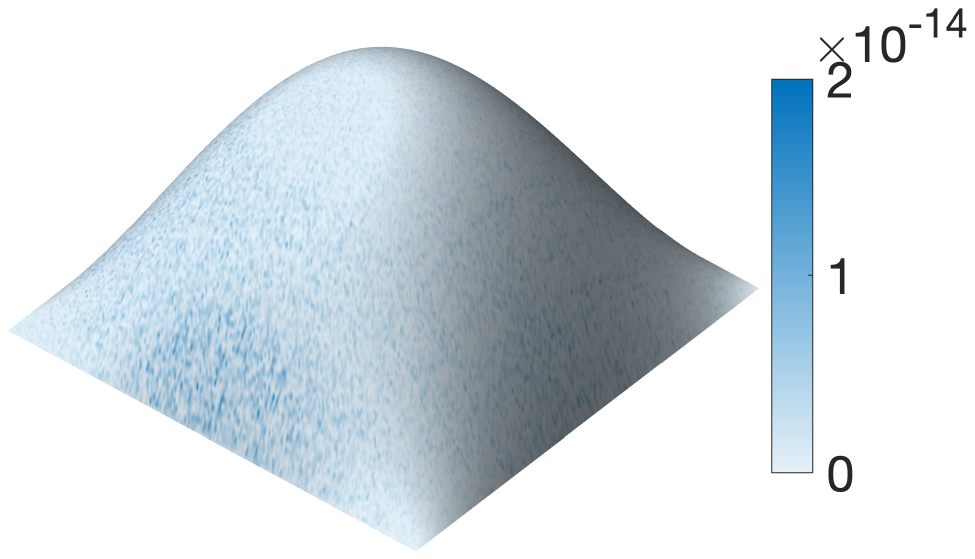}} \\
    \subfigure[Rational B\'ezier surface]{
        \includegraphics[width=0.29\linewidth]{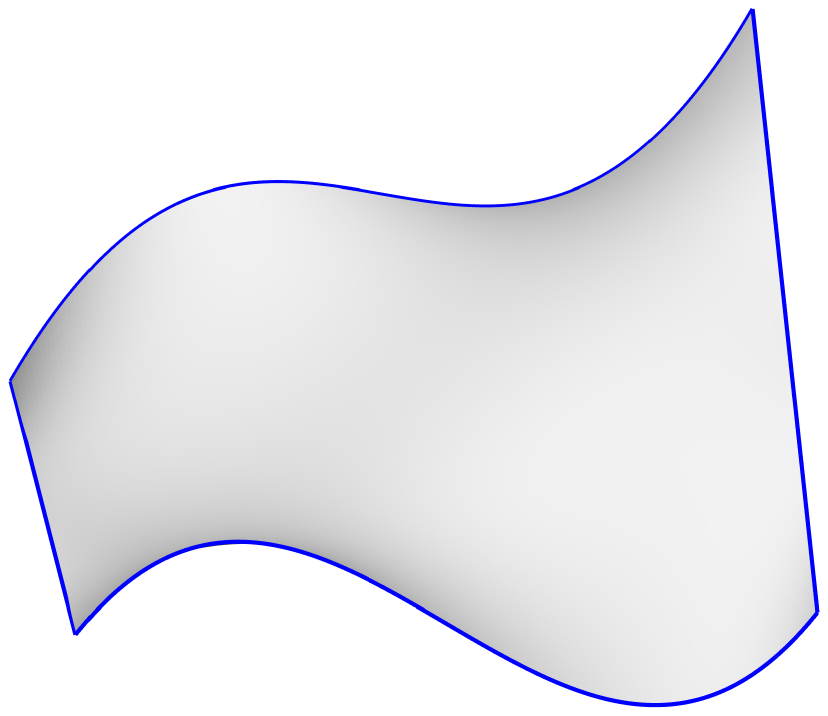}}
    \hfill
    \subfigure[Gram determinant (direct evaluation)]{
        \includegraphics[width=0.32\linewidth]{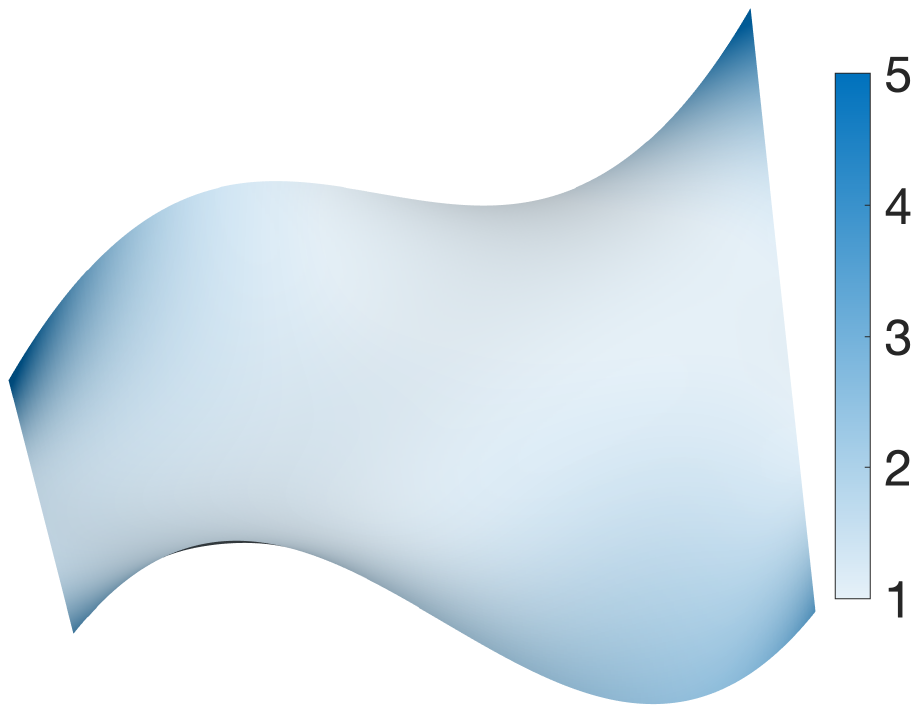}}
    \hfill
    \subfigure[Absolute error (rational case)]{
        \includegraphics[width=0.35\linewidth]{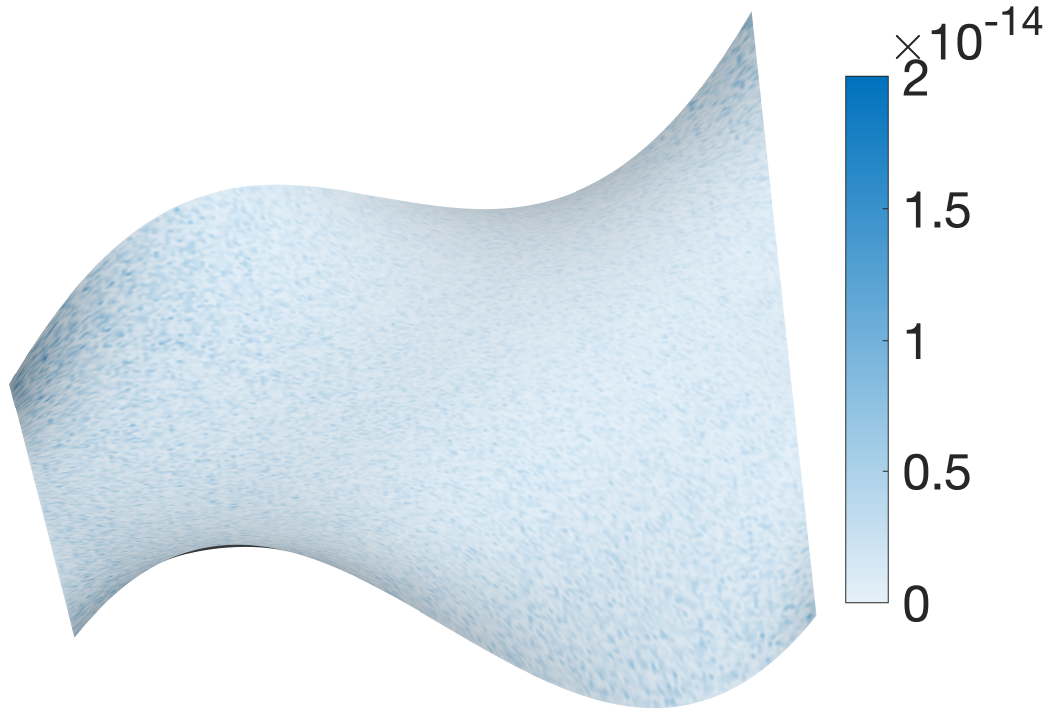}}
    \caption{Validation of the Bernstein-coefficient formulation for polynomial and rational B\'ezier patches. In each case, the Gram determinant obtained via direct numerical differentiation is compared with the Bernstein-based reconstruction. The first row corresponds to the polynomial case, and the second row to the rational case. The absolute errors are observed to be at the level of machine precision.}
    \label{fig:bernstein-validation}
\end{figure}

For each patch, the quantity $\det\!\big((\bm{\mathcal{J}})^\top\bm{\mathcal{J}}\big)$ is evaluated at uniformly sampled points in the parametric domain and compared with the Bernstein-based reconstruction. The accuracy is measured using the maximum absolute and relative errors.

The quantitative comparison confirms the accuracy of the proposed formulation. For the polynomial case, the maximum absolute and relative errors are $2.22\times10^{-14}$ and $4.64\times10^{-15}$, respectively, while for the rational case they are $1.51\times10^{-14}$ and $6.52\times10^{-15}$. In all cases, the discrepancies are at the level of machine precision, confirming the correctness of the Bernstein-based reconstruction.

\subsection{Regularity classification of subdivided rational B\'ezier patches}

We evaluate the proposed regularity checking framework on representative rational B\'ezier patches. The framework integrates sufficient and necessary sign conditions with a subdivision-based refinement strategy to provide a unified classification of patch regularity.

For a broad class of patches, regularity can be determined directly from the Bernstein coefficients, without requiring hierarchical refinement. If all coefficients $D_{rs}$ are nonnegative, the sufficient condition is satisfied and the patch is classified as \texttt{Valid}. Conversely, if the corner coefficients violate the sign-consistency condition in Theorem~\ref{thm:corner}, the patch is classified as \texttt{Invalid}.

These two cases represent situations where the regularity can be determined a priori from the control coefficients, providing an efficient and reliable classification mechanism. Representative examples are shown in Fig.~\ref{fig:direct_classification}, where both a regular patch and an invalid configuration are correctly identified without subdivision.

\begin{figure}
    \centering
    \subfigure[Regular patch with strictly positive Gram determinant]{\includegraphics[width=0.43\linewidth]{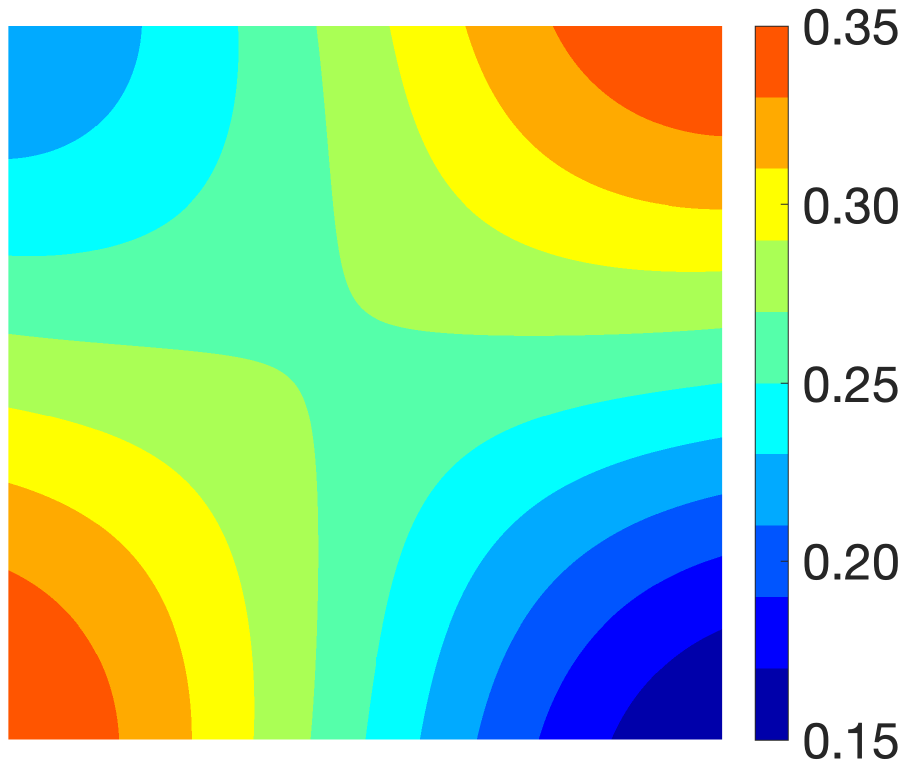}}
    \quad
    \subfigure[Invalid patch with sign inconsistency in corner coefficients]{\includegraphics[width=0.43\linewidth]{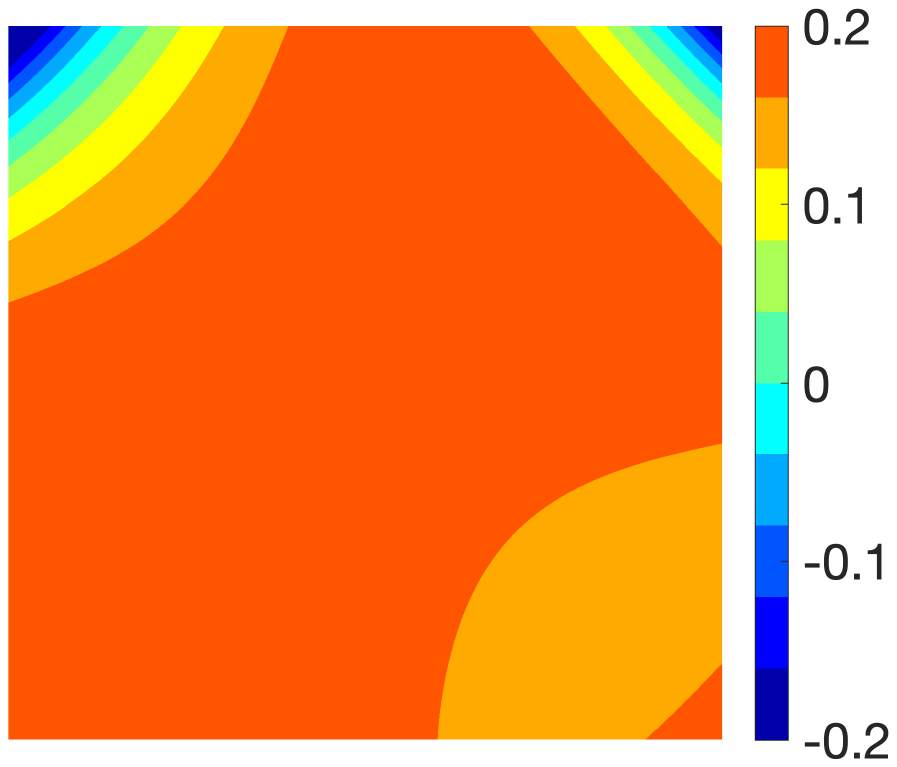}}
    \caption{Direct classification based on Bernstein coefficients. (a) A patch satisfying the sufficient condition, where the Gram determinant remains positive over the entire domain. (b) A patch violating the necessary corner condition, indicating a loss of regularity.}
    \label{fig:direct_classification}
\end{figure}

For patches that cannot be conclusively classified by the direct criteria, we employ a hierarchical subdivision strategy. Such ambiguous cases typically arise when the Bernstein coefficients exhibit mixed signs or when the Gram determinant becomes small in localized regions.

The effectiveness of this approach relies on the localizing property of the Bernstein representation. At a coarse level, the coefficients encode only global information and may obscure localized irregularities. Subdivision generates new sets of coefficients associated with smaller parametric subdomains, thereby revealing finer-scale variations of the Gram determinant. Consequently, regions where the determinant is close to zero or changes sign become progressively localized and easier to identify.

Figure~\ref{fig:subdivision_classification} illustrates this behavior. The initial patch exhibits mixed-sign or near-degenerate regions in the Gram determinant, preventing direct classification. Through hierarchical subdivision, these regions are progressively isolated, and the resulting subpatches can be clearly classified as \texttt{Valid} or \texttt{Invalid}.

Starting from an initially ambiguous patch, the classification is iteratively refined across subdivision levels. At each level, subpatches are categorized as \texttt{Valid}, \texttt{Invalid}, or \texttt{Undetermined}. In practice, the number of \texttt{Undetermined} patches decreases rapidly, leading to a definitive classification after only a few refinement steps.

\begin{figure}
    \centering
    \subfigure[Distribution of the Gram determinant for an ambiguous patch]{
        \includegraphics[width=0.47\linewidth]{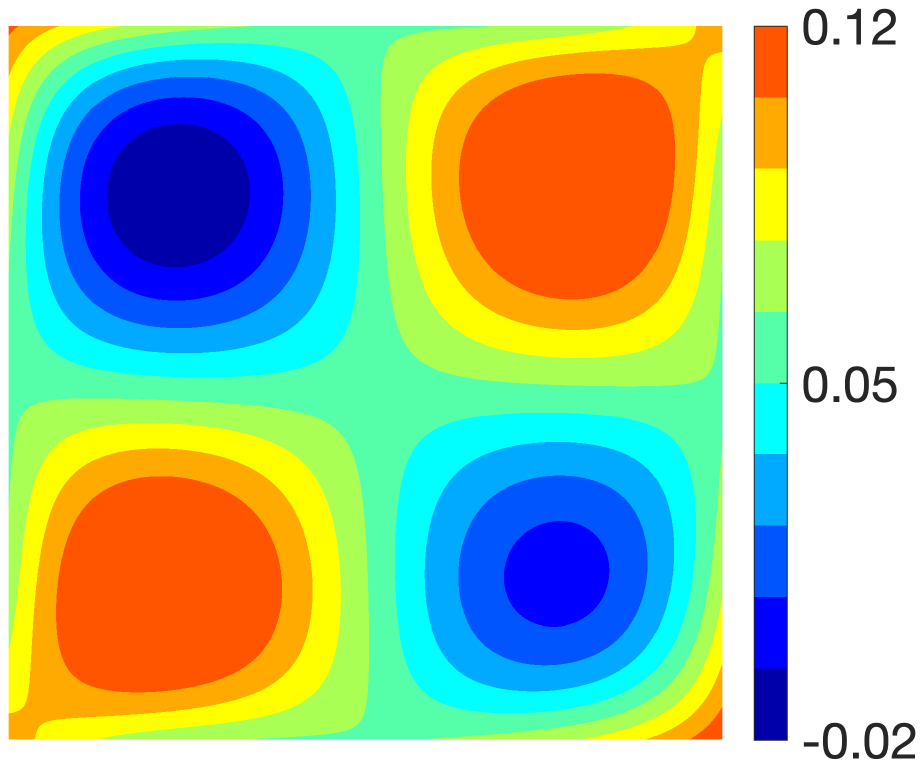}
    }
    \quad
    \subfigure[Hierarchical subdivision and classification of subpatches]{
        \includegraphics[width=0.38\linewidth]{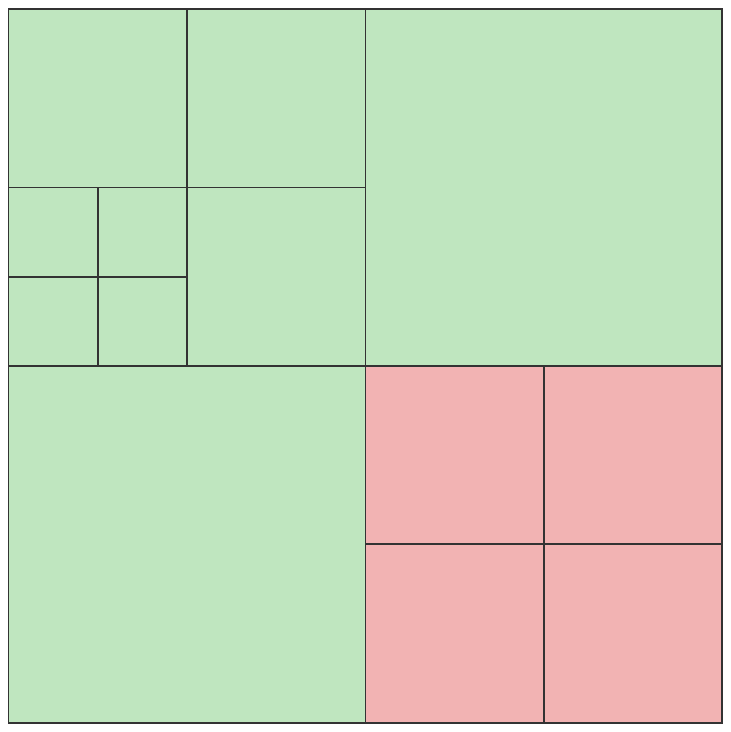}
    }
    \caption{Subdivision-based resolution of ambiguous cases. (a) The Gram determinant exhibits mixed-sign or near-zero regions, preventing direct classification. (b) The hierarchical subdivision progressively localizes these regions, leading to a clear classification of subpatches as \texttt{Valid} (green) or \texttt{Invalid} (red).
    }
    \label{fig:subdivision_classification}
\end{figure}

Table~\ref{tab:subdivision_stats} quantifies this process. Starting from a fully ambiguous configuration at Level~0, subdivision rapidly reduces the number of \texttt{Undetermined} patches, which vanish entirely by Level~3. This fast decay demonstrates that ambiguity is effectively resolved through recursive localization.

\begin{table}
\centering
\caption{Level-wise classification statistics during hierarchical subdivision.}
\label{tab:subdivision_stats}
\begin{tabular}{ccccc}
\hline
Level & Total & Valid & Invalid & Undetermined \\
\hline
0 & 1 & 0 & 0 & 1 \\
1 & 4 & 2 & 0 & 2 \\
2 & 8 & 3 & 4 & 1 \\
3 & 4 & 4 & 0 & 0 \\
\hline
\end{tabular}
\end{table}

\subsection{Computational efficiency}

We assess the computational performance of the proposed bijectivity checking framework with respect to the polynomial degree of the underlying representation.

We consider rational bi-degree-$(n,n)$ B\'ezier patches with $n=1,\dots,10$, and for each degree we evaluate $1000$ randomly generated samples. Figure~\ref{fig:computational-efficiency}(a) shows the average computation time as a function of the polynomial degree, while Table~\ref{tab:efficiency} reports detailed timing statistics, including mean, standard deviation, and extrema.

The results show a steady increase in runtime as the degree grows. This behavior can be attributed to the quadratic growth in the number of Bernstein coefficients with respect to $n$, which leads to a corresponding increase in algebraic operations such as inner products and polynomial multiplications. Nevertheless, the overall growth remains moderate, indicating that the method exhibits only mild sensitivity to the polynomial degree in practical settings.

To further assess robustness, we analyze the per-sample computation time over $1000$ independent runs. Figure~\ref{fig:computational-efficiency}(b) shows that the runtime remains highly stable, with only minor fluctuations across samples.

\begin{figure}
    \centering
    \subfigure[Computation time versus polynomial degree]{
        \includegraphics[width=0.47\linewidth]{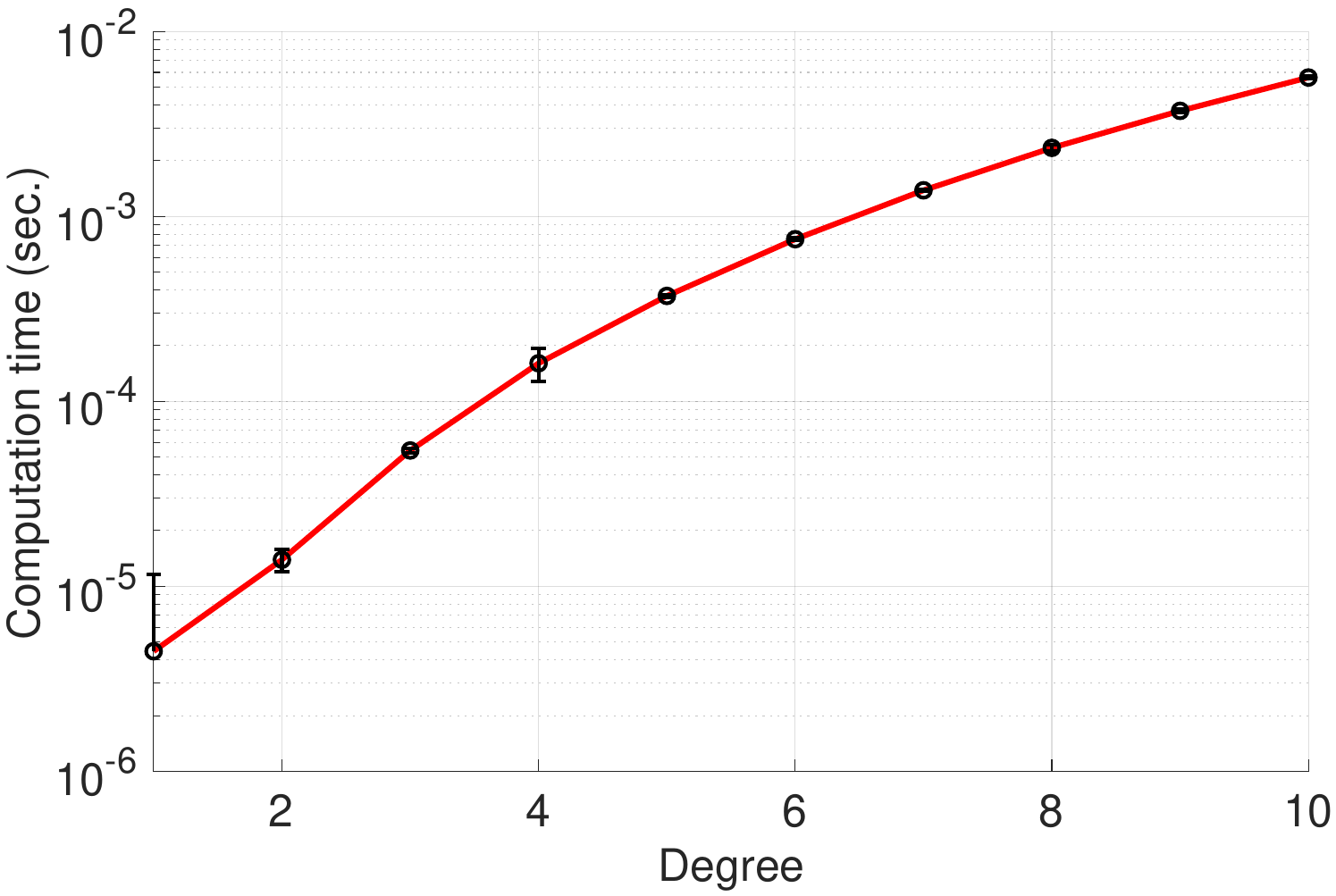}}
    \quad
    \subfigure[Per-sample computation time]{
        \includegraphics[width=0.47\linewidth]{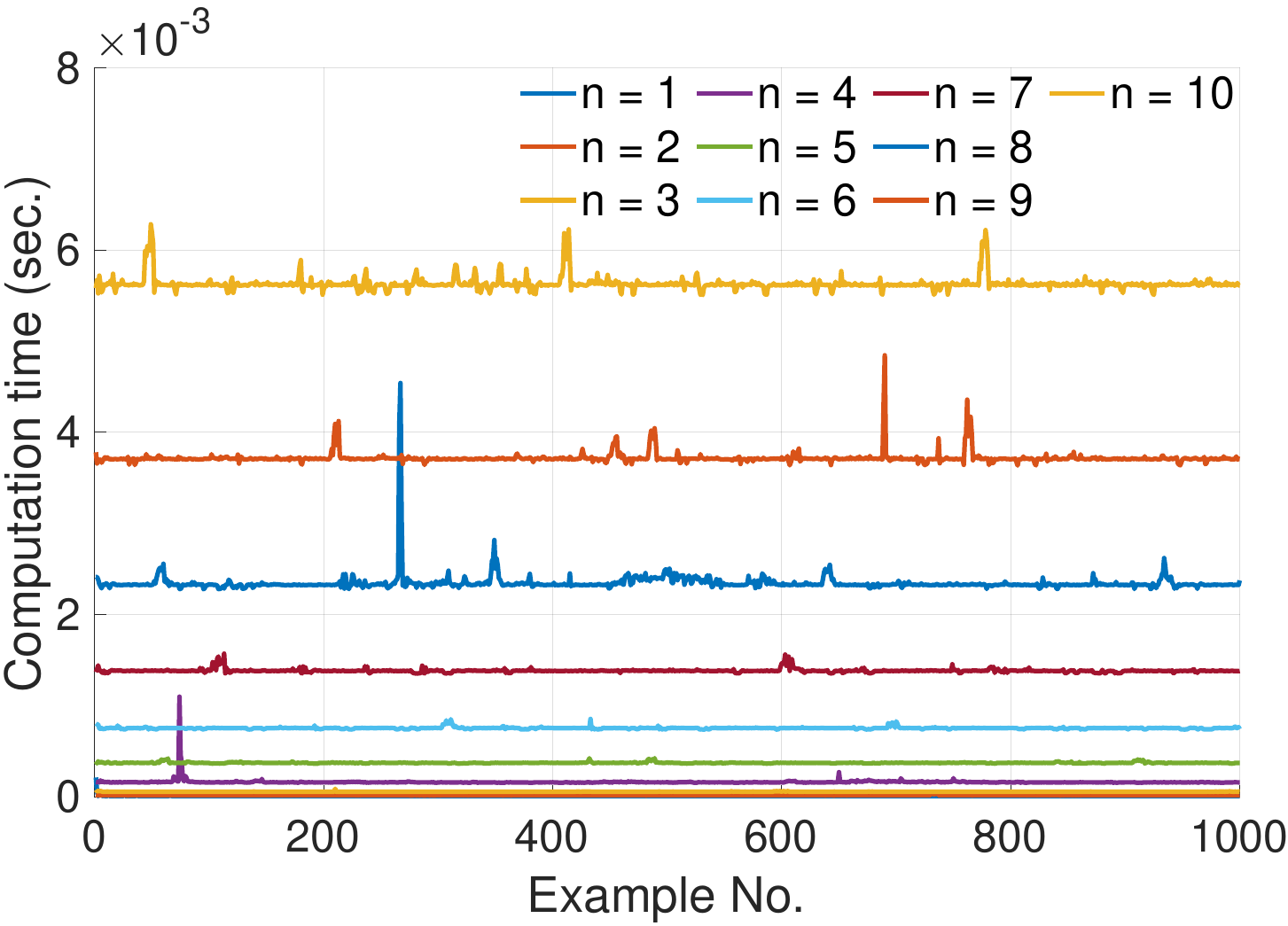}}
    \caption{Computational performance of the proposed coefficient evaluation algorithm. 
    (a) Average computation time as a function of the polynomial degree for rational bi-degree-$(n,n)$ B\'ezier patches with $n=1,\dots,10$, showing a steady increase consistent with the growth in the number of Bernstein coefficients. 
    (b) Per-sample computation time over 1000 random samples (for representative degrees), demonstrating highly stable runtimes with only minor fluctuations.}
    \label{fig:computational-efficiency}
\end{figure}

This stability reflects the deterministic structure of the coefficient evaluation procedure and the absence of iterative solvers or data-dependent branching, making the method well suited for reliable large-scale computations.

\begin{table}
\centering
\caption{Computational cost of the coefficient evaluation algorithm for rational bi-degree-$(n,n)$ B\'ezier patches. For each degree, $1000$ random samples are evaluated.}
\label{tab:efficiency}
\begin{tabular}{c c c c c c c}
\hline
Degree $n$ & \#CPs & Mean (s) & Std (s) & Min (s) & Max (s) & Repeats \\
\hline
1  & 4   & $4.45\times10^{-6}$ & $7.11\times10^{-6}$ & $3.80\times10^{-6}$ & $2.11\times10^{-4}$ & 59 \\
2  & 9   & $1.39\times10^{-5}$ & $1.98\times10^{-6}$ & $1.34\times10^{-5}$ & $5.22\times10^{-5}$ & 181 \\
3  & 16  & $5.42\times10^{-5}$ & $1.60\times10^{-6}$ & $5.31\times10^{-5}$ & $8.65\times10^{-5}$ & 63 \\
4  & 25  & $1.61\times10^{-4}$ & $3.26\times10^{-5}$ & $1.54\times10^{-4}$ & $1.10\times10^{-3}$ & 28 \\
5  & 36  & $3.71\times10^{-4}$ & $6.37\times10^{-6}$ & $3.63\times10^{-4}$ & $4.22\times10^{-4}$ & 13 \\
6  & 49  & $7.52\times10^{-4}$ & $1.03\times10^{-5}$ & $7.37\times10^{-4}$ & $8.53\times10^{-4}$ & 7 \\
7  & 64  & $1.38\times10^{-3}$ & $1.91\times10^{-5}$ & $1.35\times10^{-3}$ & $1.57\times10^{-3}$ & 3 \\
8  & 81  & $2.34\times10^{-3}$ & $1.02\times10^{-4}$ & $2.28\times10^{-3}$ & $4.54\times10^{-3}$ & 3 \\
9  & 100 & $3.72\times10^{-3}$ & $6.39\times10^{-5}$ & $3.63\times10^{-3}$ & $4.84\times10^{-3}$ & 2 \\
10 & 121 & $5.63\times10^{-3}$ & $7.50\times10^{-5}$ & $5.51\times10^{-3}$ & $6.28\times10^{-3}$ & 1 \\
\hline
\end{tabular}
\end{table}

Taken together, these results provide a clear picture of the computational behavior of the proposed method. The runtime increases moderately with the polynomial degree, primarily due to the quadratic growth in the number of Bernstein coefficients. At the same time, the low variance observed across samples indicates that the method is robust and largely insensitive to input variations. In practical applications, most patches can be classified at the initial level using the sufficient or necessary conditions, and subdivision is required only in a small number of geometrically challenging regions. Consequently, the proposed framework achieves a favorable balance between robustness and efficiency, making it well suited for large-scale T-spline models in isogeometric analysis.

\subsection{Complex T-spline geometries.}

We evaluate the proposed framework on two representative T-spline models of practical relevance, namely a multi-patch bicycle frame and a chair geometry. Both examples exhibit nontrivial topology and heterogeneous local refinement, making them suitable benchmarks for assessing robustness on complex geometries.

For each model, the T-spline representation is decomposed into a collection of rational B\'ezier patches, and the proposed coefficient-based bijectivity criterion is applied in a patch-wise manner. The results show that the vast majority of patches satisfy the sufficient condition directly, indicating that the parametrizations are locally regular over most of the domain. Only a small subset of patches exhibits near-degenerate behavior, as indicated by small values of the Gram determinant. These regions are automatically identified by the method and can be further analyzed or refined if necessary.

\begin{figure}
    \centering
    \subfigure[multi-patch bicycle frame model]{
        \includegraphics[width=0.4\linewidth]{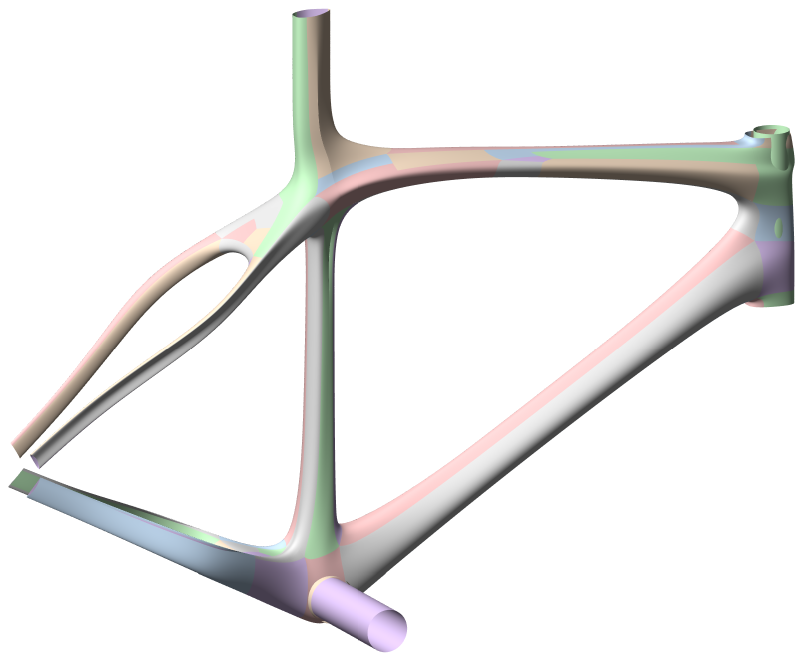}} \ 
    \subfigure[Distribution of the scaled Jacobian on the bicycle frame]{
        \includegraphics[width=0.46\linewidth]{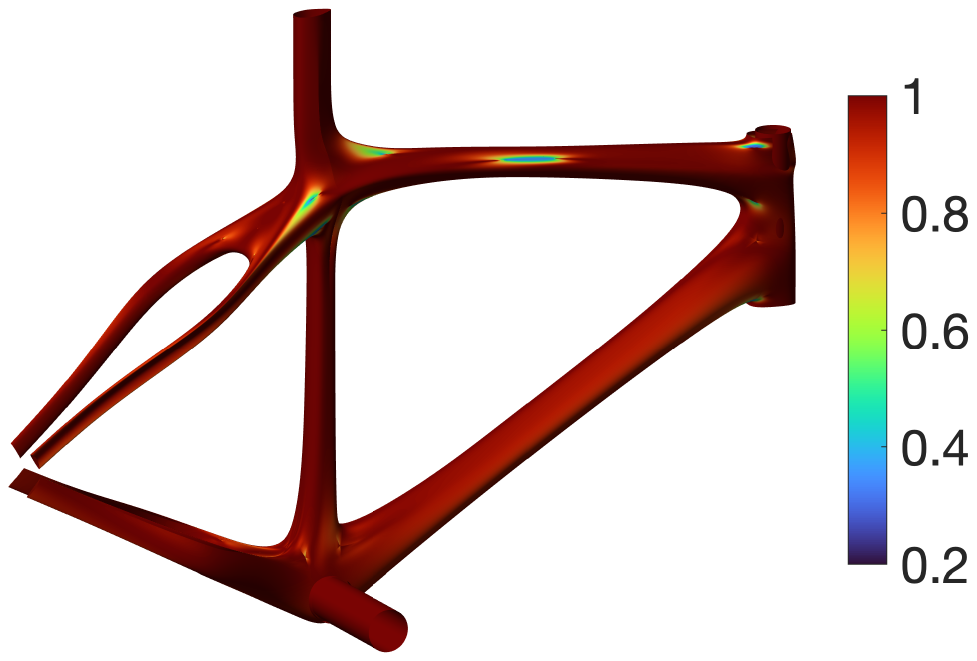}} \\
    \subfigure[multi-patch chair model]{
        \includegraphics[width=0.34\linewidth]{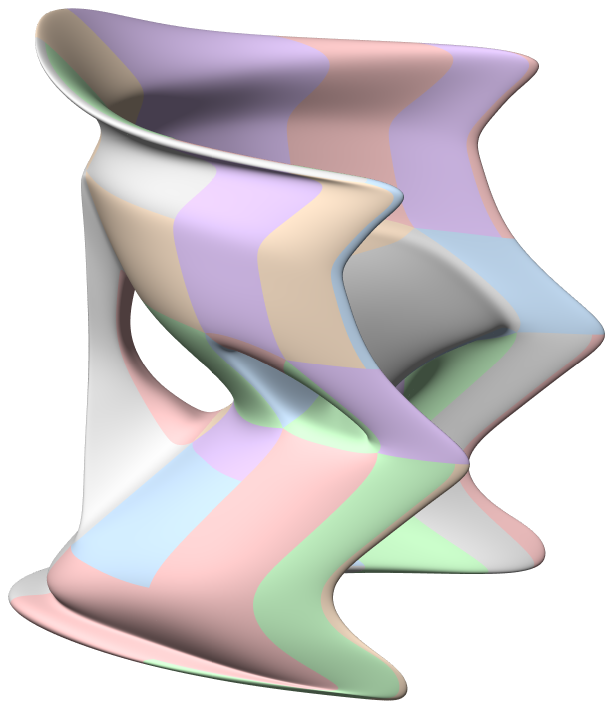}}
    \qquad
    \subfigure[Distribution of the scaled Jacobian on the chair model]{
        \includegraphics[width=0.39\linewidth]{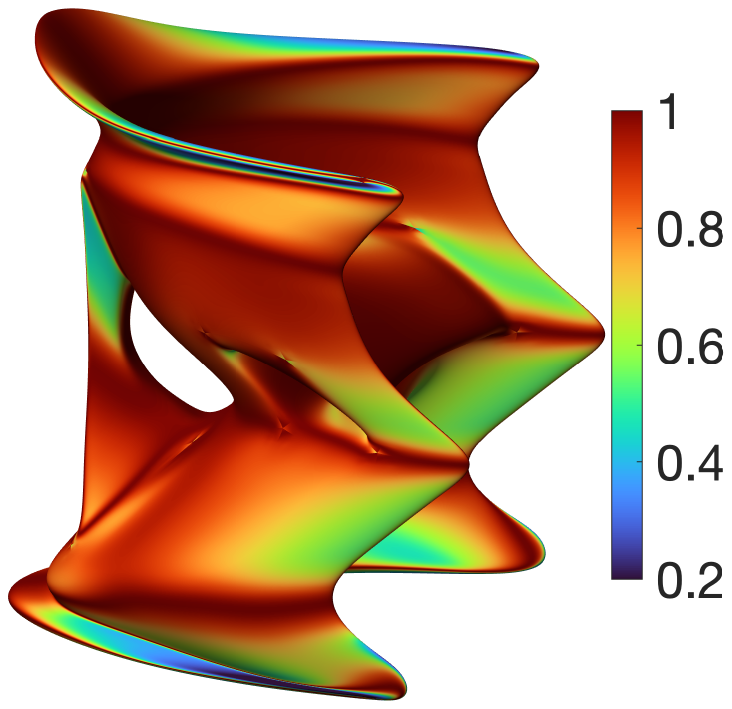}}
    \caption{Complex multi-patch T-spline geometries and the corresponding distributions of the scaled Jacobian. The left column shows the geometric models with patch-wise coloring, while the right column visualizes the spatial distribution of the scaled Jacobian. Lower values indicate increased geometric distortion and proximity to degeneracy.}
    \label{fig:real-models}
\end{figure}

Figure~\ref{fig:real-models} illustrates both the geometric models and the corresponding distributions of the scaled Jacobian. The left column shows the patch-wise geometry, while the right column visualizes the spatial variation of this normalized quantity. Although the classification is performed based on the Bernstein coefficients of the Gram determinant, the scaled Jacobian provides an intuitive visualization of geometric quality. Regions with lower scaled Jacobian values clearly correspond to areas of increased geometric distortion and potential loss of regularity. In particular, values approaching zero indicate near-degenerate parametrizations. Importantly, these regions remain spatially localized and do not propagate across the entire geometry, demonstrating that the proposed framework can effectively isolate problematic areas without triggering unnecessary global refinement.

From a computational perspective, the bicycle frame model is decomposed into $7372$ bi-cubic rational B\'ezier patches, with a total computation time of $0.732$ seconds. The chair model consists of $13180$ bi-cubic rational B\'ezier patches and requires $1.2405$ seconds. These results indicate that the computational cost scales approximately linearly with the number of patches, highlighting the efficiency of the approach for large-scale models.

The experiments demonstrate that the proposed framework is applicable to complex, real-world T-spline geometries. It provides a robust and efficient tool for assessing local regularity, detecting near-degenerate regions, and supporting scalable analysis in isogeometric modeling.

\section{Conclusion and outlook}
\label{sec:conclusion}

In this work, we have developed a Bernstein-based framework for the bijectivity analysis of rational T-spline surfaces. By exploiting B\'ezier extraction, the problem is reformulated at the element level, where the Gram determinant admits an explicit Bernstein representation. This formulation enables a rigorous and computable characterization of local regularity in terms of coefficient signs, thereby providing a practical pathway toward certifying global bijectivity.

The proposed approach combines coefficient-based sign conditions with a hierarchical subdivision strategy, yielding an adaptive procedure that efficiently resolves inconclusive regions. The proposed method offers a certified alternative that avoids dense evaluations while remaining fully compatible with standard isogeometric analysis workflows. More broadly, the framework highlights the fundamental role of Bernstein representations in geometric validity analysis and provides a principled basis for reliable parametrization in simulation-driven applications. Numerical experiments confirm that the method robustly distinguishes valid, invalid, and near-degenerate configurations, while maintaining high computational efficiency through localized refinement.

Several directions for future research remain open. A natural extension concerns volumetric parametrizations, such as trivariate T-splines, where bijectivity constraints are substantially more stringent. From a theoretical perspective, reducing the gap between sufficient and necessary conditions remains an important step toward a complete characterization. On the algorithmic side, coupling bijectivity verification with adaptive refinement or optimization could enable the automatic construction of valid parametrizations. Finally, extending the framework to geometries with extraordinary points and complex topologies calls for deeper analytical tools.

\section*{Acknowledgements}
\label{sec:Acknowledgement}
This research was supported by the National Natural Science Foundation of China (Nos.~12301490, 12471358), the Youth Talent Innovation Program of Dalian (No.~2023RQ088), and the China Postdoctoral Science Foundation (No.~2025M773071).

\bibliographystyle{elsarticle-num}
\bibliography{mybibfile}

\appendix
\section{Proof of Theorem~\ref{thm:gram-bernstein}}
\label{sec:appendix_A}

\begin{proof}
The result relies on the closure of the Bernstein basis under differentiation, multiplication, and linear combinations. Specifically, derivatives of Bernstein polynomials remain within a Bernstein basis of reduced degree, while products of Bernstein polynomials can be represented exactly in a Bernstein basis of higher degree. Consequently, all quantities involved in the Gram determinant admit Bernstein representations. The proof proceeds in several steps.

\textbf{Step 1: Rational derivative representation.}

For the rational B\'ezier parametrization
\begin{equation}
    \bm{x}^e = \frac{\mathbf{F}^e}{W^e},
\end{equation}
the partial derivatives are given by
\begin{equation}
\bm{x}^e_\xi = \frac{\mathbf{F}^e_\xi W^e - \mathbf{F}^e W^e_\xi}{(W^e)^2}
= \frac{\mathbf{U}^e}{(W^e)^2},
\end{equation}
and
\begin{equation}
    \bm{x}^e_\eta = \frac{\mathbf{F}^e_\eta W^e - \mathbf{F}^e W^e_\eta}{(W^e)^2} = \frac{\mathbf{V}^e}{(W^e)^2},
\end{equation}
where $\mathbf{U}^e$ and $\mathbf{V}^e$ are polynomial vector fields.

\textbf{Step 2: Bernstein structure of $\mathbf{U}^e$ and $\mathbf{V}^e$.}

We denote by $\mathcal{B}^{(p,q)}$ the tensor-product Bernstein space of bi-degree $(p,q)$. 
Let
\begin{equation}
\mathbf{F}^e(\xi,\eta)
=
\sum_{i=0}^{p}\sum_{j=0}^{q}
\mathbf{F}_{ij} \, B_i^p(\xi) B_j^q(\eta),
\qquad
W^e(\xi,\eta)
=
\sum_{k=0}^{p}\sum_{\ell=0}^{q}
W_{k\ell} \, B_k^p(\xi) B_\ell^q(\eta).
\end{equation}

Using standard differentiation formulas of Bernstein polynomials, we obtain
\begin{equation}
\mathbf{F}^e_\xi(\xi,\eta)
=
\sum_{i=0}^{p-1}\sum_{j=0}^{q}
\mathbf{F}^\xi_{ij} \, B_i^{p-1}(\xi) B_j^q(\eta),
\end{equation}
\begin{equation}
W^e_\xi(\xi,\eta)
=
\sum_{k=0}^{p-1}\sum_{\ell=0}^{q}
W^\xi_{k\ell} \, B_k^{p-1}(\xi) B_\ell^q(\eta),
\end{equation}
where the derivative coefficients are given by
\begin{equation}
\mathbf{F}^\xi_{ij}
=
p \left( \mathbf{F}_{i+1,j} - \mathbf{F}_{i,j} \right),
\qquad
W^\xi_{k\ell}
=
p \left( W_{k+1,\ell} - W_{k,\ell} \right).
\end{equation}

Using the Bernstein product rule, the product $\mathbf{F}^e_\xi W^e$ admits the expansion
\begin{equation}
\mathbf{F}^e_\xi W^e
=
\sum_{r=0}^{2p-1}\sum_{s=0}^{2q}
\left(
\sum_{\substack{i+k=r \\ j+\ell=s}}
\alpha_{i,k}^{p-1,p} \, \alpha_{j,\ell}^{q,q}
\, \mathbf{F}^\xi_{ij} \, W_{k\ell}
\right)
B_r^{2p-1}(\xi) B_s^{2q}(\eta),
\end{equation}
where
\begin{equation}
\alpha_{i,k}^{n,m}
=
\frac{\binom{n}{i}\binom{m}{k}}{\binom{n+m}{i+k}}.
\end{equation}

Similarly,
\begin{equation}
\mathbf{F}^e W^e_\xi
=
\sum_{r=0}^{2p-1}\sum_{s=0}^{2q}
\left(
\sum_{\substack{i+k=r \\ j+\ell=s}}
\alpha_{i,k}^{p,p-1} \, \alpha_{j,\ell}^{q,q}
\, \mathbf{F}_{ij} \, W^\xi_{k\ell}
\right)
B_r^{2p-1}(\xi) B_s^{2q}(\eta).
\end{equation}

Therefore, $\mathbf{U}^e = \mathbf{F}^e_\xi W^e - \mathbf{F}^e W^e_\xi=\sum_{r=0}^{2p-1}\sum_{s=0}^{2q}\mathbf{U}^e_{rs}B_r^{2p-1}(\xi) B_s^{2q}(\eta) \in \mathcal{B}^{(2p-1,2q)}$, and  the coefficients $\mathbf{U}^e_{rs}$ are given by
\begin{equation}
\mathbf{U}^e_{rs}
=
\sum_{\substack{i+k=r \\ j+\ell=s}}
\left(
\alpha_{i,k}^{p-1,p} \alpha_{j,\ell}^{q,q} \, \mathbf{F}^\xi_{ij} \, W_{k\ell}
-
\alpha_{i,k}^{p,p-1} \alpha_{j,\ell}^{q,q} \, \mathbf{F}_{ij} \, W^\xi_{k\ell}
\right).
\end{equation}

Similarly, for the derivative with respect to $\eta$, we write
\begin{equation}
\mathbf{F}^e_\eta
=
\sum_{i=0}^{p}\sum_{j=0}^{q-1}
\mathbf{F}^\eta_{ij} \, B_i^{p}(\xi) B_j^{q-1}(\eta),
\qquad
\mathbf{F}^\eta_{ij}
=
q \left( \mathbf{F}_{i,j+1} - \mathbf{F}_{i,j} \right),
\end{equation}
and
\begin{equation}
W^e_\eta
=
\sum_{k=0}^{p}\sum_{\ell=0}^{q-1}
W^\eta_{k\ell} \, B_k^{p}(\xi) B_\ell^{q-1}(\eta),
\qquad
W^\eta_{k\ell}
=
q \left( W_{k,\ell+1} - W_{k,\ell} \right).
\end{equation}

Applying the Bernstein product rule to
$\mathbf{V}^e=\mathbf{F}^e_\eta W^e-\mathbf{F}^e W^e_\eta$,
we obtain
\begin{equation}
\mathbf{V}^e
=
\sum_{r=0}^{2p}\sum_{s=0}^{2q-1}
\mathbf{V}^e_{rs} \, B_r^{2p}(\xi) B_s^{2q-1}(\eta),
\end{equation}
where the coefficients $\mathbf{V}^e_{rs}$ are given by
\begin{equation}
\mathbf{V}^e_{rs}
=
\sum_{\substack{i+k=r \\ j+\ell=s}}
\frac{\binom{p}{i}\binom{p}{k}}{\binom{2p}{r}}
\left[
\mathbf{F}^\eta_{ij} W_{k\ell}
\frac{\binom{q-1}{j}\binom{q}{\ell}}{\binom{2q-1}{s}}
-
\mathbf{F}_{ij} W^\eta_{k\ell}
\frac{\binom{q}{j}\binom{q-1}{\ell}}{\binom{2q-1}{s}}
\right].
\end{equation}

\textbf{Step 3: Gram determinant structure.}

The Jacobian matrix of the parametrization can be written as
\begin{equation}
\bm{\mathcal{J}}^e
=
\frac{1}{(W^e)^2}
[\mathbf{U}^e \ \mathbf{V}^e].
\end{equation}
Consequently, the associated Gram matrix is given by
\begin{equation}
(\bm{\mathcal{J}}^e)^\top \bm{\mathcal{J}}^e
=
\frac{1}{(W^e)^4}
\begin{bmatrix}
\mathbf{U}^e \!\cdot\! \mathbf{U}^e & \mathbf{U}^e \!\cdot\! \mathbf{V}^e \\
\mathbf{V}^e \!\cdot\! \mathbf{U}^e & \mathbf{V}^e \!\cdot\! \mathbf{V}^e
\end{bmatrix}.
\end{equation}
Taking the determinant yields
\begin{equation}
\det\!\big((\bm{\mathcal{J}}^e)^\top \bm{\mathcal{J}}^e\big)
=
\frac{1}{(W^e)^8}
\left[
(\mathbf{U}^e\!\cdot\!\mathbf{U}^e)
(\mathbf{V}^e\!\cdot\!\mathbf{V}^e)
-
(\mathbf{U}^e\!\cdot\!\mathbf{V}^e)^2
\right],
\label{eq:gram-structure}
\end{equation}
where we used the standard identity for the determinant of a $2\times2$ symmetric matrix.

\textbf{Step 4: Closure under inner products.}

The inner product of vector-valued Bernstein polynomials is defined componentwise. 
Since each component of $\mathbf{U}^e$ and $\mathbf{V}^e$ admits a Bernstein expansion, their inner products remain Bernstein polynomials with degrees obtained by addition:
\begin{equation}
\mathbf{U}^e\cdot\mathbf{U}^e \in \mathcal{B}^{(4p-2,\,4q)},
\qquad
\mathbf{V}^e\cdot\mathbf{V}^e \in \mathcal{B}^{(4p,\,4q-2)},
\end{equation}
\begin{equation}
\mathbf{U}^e\cdot\mathbf{V}^e \in \mathcal{B}^{(4p-1,\,4q-1)}.
\end{equation}

More precisely, let
\begin{equation}
\mathbf{U}^e
=
\sum_{i=0}^{2p-1}\sum_{j=0}^{2q}
\mathbf{U}^e_{ij} \, B_i^{2p-1}(\xi) B_j^{2q}(\eta),
\qquad
\mathbf{V}^e
=
\sum_{k=0}^{2p}\sum_{\ell=0}^{2q-1}
\mathbf{V}^e_{k\ell} \, B_k^{2p}(\xi) B_\ell^{2q-1}(\eta).
\end{equation}

Then, applying the Bernstein product rule componentwise, we obtain
\begin{equation}
\mathbf{U}^e \cdot \mathbf{U}^e
=
\sum_{r=0}^{4p-2}\sum_{s=0}^{4q}
A_{rs}^e \, B_r^{4p-2}(\xi) B_s^{4q}(\eta),
\end{equation}
with coefficients
\begin{equation}
A_{rs}^e
=
\sum_{\substack{i+i'=r \\ j+j'=s}}
(\mathbf{U}^e_{ij} \cdot \mathbf{U}^e_{i'j'})
\,
\frac{\binom{2p-1}{i}\binom{2p-1}{i'}}{\binom{4p-2}{r}}
\frac{\binom{2q}{j}\binom{2q}{j'}}{\binom{4q}{s}}.
\end{equation}

Similarly,
\begin{equation}
\mathbf{V}^e \cdot \mathbf{V}^e
=
\sum_{r=0}^{4p}\sum_{s=0}^{4q-2}
B_{rs}^e \, B_r^{4p}(\xi) B_s^{4q-2}(\eta),
\end{equation}
\begin{equation}
\mathbf{U}^e \cdot \mathbf{V}^e
=
\sum_{r=0}^{4p-1}\sum_{s=0}^{4q-1}
C_{rs}^e \, B_r^{4p-1}(\xi) B_s^{4q-1}(\eta),
\end{equation}
where the coefficients $B_{rs}^e$ and $C_{rs}^e$ are defined analogously as
\begin{equation}
\begin{aligned}
     B^e_{rs}&=\sum_{\substack{k+k'=r\\l+l'=s}}(\mathbf{V}^e_{kl}\cdot\mathbf{V}^e_{k'l'})\frac{\binom{2p}{k}\binom{2p}{k'}}{\binom{4p}{r}}\frac{\binom{2q-1}{l}\binom{2q-1}{l'}}{\binom{4q-2}{s}},\\
     C^e_{rs}&=\sum_{\substack{i+k=r\\j+l=s}}(\mathbf{U}^e_{ij}\cdot\mathbf{V}^e_{kl})\frac{\binom{2p-1}{i}\binom{2p}{k}}{\binom{4p-1}{r}}\frac{\binom{2q}{j}\binom{2q-1}{l}}{\binom{4q-1}{s}}.
\end{aligned}
\end{equation}

Applying the Bernstein product rule once more, we obtain
\begin{equation}
(\mathbf{U}^e\cdot\mathbf{U}^e)(\mathbf{V}^e\cdot\mathbf{V}^e)
=
\sum_{r=0}^{8p-2}\sum_{s=0}^{8q-2}
E_{rs}^e \, B_r^{8p-2}(\xi) B_s^{8q-2}(\eta),
\end{equation}
\begin{equation}
(\mathbf{U}^e\cdot\mathbf{V}^e)^2
=
\sum_{r=0}^{8p-2}\sum_{s=0}^{8q-2}
F_{rs}^e \, B_r^{8p-2}(\xi) B_s^{8q-2}(\eta).
\end{equation}
where 
\begin{equation}
\begin{aligned}
    E^e_{rs}&=\sum_{\substack{r_1+r_2=r\\s_1+s_2=s}}(A^e_{r_1s_1}\cdot B^e_{r_2s_2})\frac{\binom{4p-2}{r_1}\binom{4p}{r_2}}{\binom{8p-2}{r}}\frac{\binom{4q}{s_1}\binom{4q-2}{s_2}}{\binom{8q-2}{s}},\\
    F^e_{rs}&=\sum_{\substack{r_1+r_2=r\\s_1+s_2=s}}(C^e_{r_1s_1}\cdot C^e_{r_2})\frac{\binom{4p-1}{r_1}\binom{4p-1}{r_2}}{\binom{8p-2}{r}}\frac{\binom{4q-1}{s_1}\binom{4q-1}{s_2}}{\binom{8q-2}{s}}.
\end{aligned}  
\end{equation}

\textbf{Step 5: Final Bernstein expansion.}

Therefore, the numerator in \eqref{eq:gram-structure} admits the Bernstein expansion
\begin{equation}
N^e(\xi,\eta)
=
\sum_{r=0}^{8p-2}\sum_{s=0}^{8q-2}
D_{rs}^e \, B_r^{8p-2}(\xi) B_s^{8q-2}(\eta),
\label{eq:bernstein_coefficients}
\end{equation}
with coefficients
\begin{equation}
D_{rs}^e = E_{rs}^e - F_{rs}^e,
\end{equation}
which leads to the representation \eqref{eq:JeTJe-final}.
\end{proof}

\end{document}